\documentclass[onefignum,onetabnum]{siamart220329}

\usepackage{braket,amsfonts}
\usepackage{amssymb}
\usepackage{multirow}

\usepackage{array}

\usepackage[caption=false]{subfig}
\captionsetup[subtable]{position=bottom}
\captionsetup[table]{position=bottom}

\usepackage{pgfplots}

\newsiamthm{claim}{Claim}
\newsiamremark{remark}{Remark}
\newsiamremark{hypothesis}{Hypothesis}
\crefname{hypothesis}{Hypothesis}{Hypotheses}

\usepackage{algorithmic}

\usepackage{graphicx,epstopdf}

\Crefname{ALC@unique}{Line}{Lines}

\usepackage{amsopn}

\usepackage{xspace}
\usepackage{bold-extra}
\usepackage[most]{tcolorbox}

\colorlet{texcscolor}{blue!50!black}
\colorlet{texemcolor}{red!70!black}
\colorlet{texpreamble}{red!70!black}
\colorlet{codebackground}{black!25!white!25}


\lstdefinestyle{siamlatex}{%
  style=tcblatex,
  texcsstyle=*\color{texcscolor},
  texcsstyle=[2]\color{texemcolor},
  keywordstyle=[2]\color{texemcolor},
  moretexcs={cref,Cref,maketitle,mathcal,text,headers,email,url},
}

\tcbset{%
  colframe=black!75!white!75,
  coltitle=white,
  colback=codebackground, 
  colbacklower=white, 
  fonttitle=\bfseries,
  arc=0pt,outer arc=0pt,
  top=1pt,bottom=1pt,left=1mm,right=1mm,middle=1mm,boxsep=1mm,
  leftrule=0.3mm,rightrule=0.3mm,toprule=0.3mm,bottomrule=0.3mm,
  listing options={style=siamlatex}
}

\newtcblisting[use counter=example]{example}[2][]{%
  title={Example~\thetcbcounter: #2},#1}

\newtcbinputlisting[use counter=example]{\examplefile}[3][]{%
  title={Example~\thetcbcounter: #2},listing file={#3},#1}

\DeclareTotalTCBox{\code}{ v O{} }
{ 
  fontupper=\ttfamily\color{black},
  nobeforeafter,
  tcbox raise base,
  colback=codebackground,colframe=white,
  top=0pt,bottom=0pt,left=0mm,right=0mm,
  leftrule=0pt,rightrule=0pt,toprule=0mm,bottomrule=0mm,
  boxsep=0.5mm,
  #2}{#1}

\patchcmd\newpage{\vfil}{}{}{}
\flushbottom


\begin{tcbverbatimwrite}{tmp_\jobname_header.tex}
\title{Sion's Minimax Theorem in Geodesic Metric Spaces and a Riemannian Extragradient Algorithm
}

\author{Peiyuan Zhang\thanks{Shanghai Qizhi Institute, Shanghai, China (\email{peiyuan.zhang@yale.edu}).}
\and Jingzhao Zhang\thanks{Institute for Interdisciplinary Information Sciences, Tsinghua University, Beijing, China (\email{jingzhaoz@mail.tsinghua.edu.cn}).}
\and Suvrit Sra\thanks{Department of EECS, Massachusetts Institute of Technology, MA, USA (\email{suvrit@mit.edu}).}}

\headers{Minimax in Geodesic Metric Spaces}{Peiyuan Zhang, Jingzhao Zhang, and Suvrit Sra}
\end{tcbverbatimwrite}
\input{tmp_\jobname_header.tex}



\begin{document}
\maketitle

\begin{tcbverbatimwrite}{tmp_\jobname_abstract.tex}
\begin{abstract}
Deciding whether saddle points exist or are approximable for nonconvex-nonconcave problems is usually intractable. This paper takes a step towards understanding a broad class of nonconvex-nonconcave minimax problems that \emph{do remain} tractable. Specifically, it studies minimax problems over geodesic metric spaces, which provide a vast generalization of the usual convex-concave saddle point problems. The first main result of the paper is a geodesic metric space version of Sion's minimax theorem; we believe our proof is novel and broadly accessible as it relies on the \emph{finite intersection property} alone. The second main result is a specialization to geodesically complete Riemannian manifolds: here, we devise and analyze the complexity of first-order methods for smooth minimax problems.
\end{abstract}

\begin{keywords}
minimax, game theory, geodesic convexity, Riemannian optimization, extragradient
\end{keywords}

\end{tcbverbatimwrite}
\input{tmp_\jobname_abstract.tex}

\newcommand{\peiyuan}[1]{{\color{blue} Peiyuan's comment: #1}}

\newcommand{\revise}[1]{#1}
\newcommand{\revisenew}[1]{#1}
\newcommand{\jingzhao}[1]{{\color{orange} Jingzhao's comment: #1}}
\newcommand{\suvrit}[1]{{\color{red} Suvrit's comment: #1}}



\crefrangeformat{assumption}{Assumptions~#3#1#4--#5#2#6}
\Crefname{subsection}{Section}{}%

\newcommand{\fact}{\textbf{Fact. }}

\newcommand{\hyphen}{{\text -}}

\renewcommand{\min}{\mathop{\mathrm{min}\vphantom{\mathrm{sup}}}}
\renewcommand{\max}{\mathop{\mathrm{max}\vphantom{\mathrm{sup}}}}
\newcommand{\nfrac}[2]{{#1} / {#2}}


%
\newcommand{\bbR}{{\mathbb{R}}}
\newcommand{\bbE}{{\mathbb{E}}}
\newcommand{\bbN}{{\mathbb{N}}}
\newcommand{\R}{\mathbb{R}}

\newcommand{\bigO}{\mathcal{O}}
\newcommand{\batch}{\mathcal{B}}

\newcommand{\gd}{{\rm gd}}

\newcommand{\CAT}{{\rm CAT}}
\newcommand{\trace}{{\rm tr}}
\newcommand{\diag}{{\rm diag}}
\newcommand{\diam}{{\rm diam}}

\newcommand{\tr}{\text{tr}}
\newcommand{\grad}{\text{grad}}
\newcommand{\hess}{\text{Hess}}
\newcommand{\Exp}{\mathtt{Exp}}
\newcommand{\Log}{\mathtt{Log}}
\newcommand{\asmp}{{\bm A}}

\newcommand{\va}{{\bm a}}
\newcommand{\vb}{{\bm b}}
\newcommand{\vg}{{\bm g}}
\newcommand{\vu}{{\bm u}}
\newcommand{\vv}{{\bm v}}
\newcommand{\vx}{{\bm x}}
\newcommand{\vy}{{\bm y}}
\newcommand{\vz}{{\bm z}}

\newcommand{\vlambda}{\bm \lambda}

\newcommand{\mA}{{\bm A}}
\newcommand{\mB}{{\bm B}}
\newcommand{\mC}{{\bm C}}
\newcommand{\mD}{{\bm D}}
\newcommand{\mG}{{\bm G}}
\newcommand{\mH}{{\bm H}}
\newcommand{\mI}{{\bm I}}
\newcommand{\mU}{{\bm U}}
\newcommand{\mS}{{\bm S}}

\newcommand{\aff}{\text{aff }}
\newcommand{\cl}{\textrm{cl }}
\newcommand{\co}{\textrm{co }}
\newcommand{\dom}{\textrm{dom }}
\newcommand{\mfld}{\mathcal{M}}
\newcommand{\mfldn}{\mathcal{N}}
\newcommand{\spd}{\mathcal{P}}
\newcommand{\sym}{\mathcak{Sym}}
\newcommand{\proj}{\mathcal{P}}
\newcommand{\stf}{\mathcal{V}}
\newcommand{\sph}{\mathcal{S}}
\newcommand{\hyp}{\mathcal{H}}
\newcommand{\epi}{\text{epi }}
\newcommand{\lsc}{\text{lsc }}

\newcommand{\mDelta}{\bm \Delta}

\newcommand{\cT}{{\mathcal T}}
\newcommand{\cM}{{\mathcal M}}
\newcommand{\cN}{{\mathcal N}}
\newcommand{\cX}{{\mathcal X}}
\newcommand{\cU}{{\mathcal U}}
\newcommand{\cV}{{\mathcal V}}
\newcommand{\cY}{{\mathcal Y}}
\newcommand{\cZ}{{\mathcal Z}}

\renewcommand{\d}[1]{\ensuremath{\operatorname{d}\!{#1}}}
\makeatletter
\newenvironment{chapquote}[2][2em]
  {\setlength{\@tempdima}{#1}%
   \def\chapquote@author{#2}%
   \parshape 1 \@tempdima \dimexpr\textwidth-2\@tempdima\relax%
   \itshape}
  {\par\normalfont\hfill--\ \chapquote@author\hspace*{\@tempdima}\par\bigskip}
\makeatother

\section{Introduction}
We study minimax optimization problems of the form
\begin{equation}
  \label{eq:2}
  \min_{x \in X}\max_{y \in Y}\quad f(x,y),
\end{equation}
where the constraint sets $X$ and $Y$ lie in geodesic metric spaces, and $f$ is a suitable bifunction. Problem~\eqref{eq:2} generalizes the standard Euclidean minimax problem where $X \subseteq \bbR^m$ and $Y \subseteq \bbR^n$. 
Minimax problems as such have drawn great attention recently, e.g., in generative adversarial networks~\cite{goodfellow2014generative}, robust learning \cite{ghaoui1997,montanari2015non}, multi-agent reinforcement learning \cite{busoniu4445757}, adversarial training~\cite{goodfellow2014explaining}, among others. 

A common goal of solving minimax problems is to find global saddle points\footnote{Without further qualification, \revise{we refer to global saddle points as saddle points in this paper}.}. A pair $(x^*, y^*)$ is a \emph{saddle point} if $x^*$ is a   minimum of $f(\cdot, y^*)$ and $y^*$ is a   maximum of $f(x^*, \cdot)$. In game theory, a   saddle point is a special Nash equilibrium~\cite{nash1950equilibrium} for a two-player game. When $f$ is convex-concave (i.e., convex in $x$ and concave in $y$), existence of saddle points is guaranteed by Sion's minimax theorem~\cite{sion1958general}, and their computation is often tractable (e.g.,~\cite{nemirovski2004prox}). But without the convex-concave structure,   saddle points may fail to exist, or even when they exist, computing them can be intractable~\cite{daskalakis2009complexity}. Even computing local saddle points with linear constraints is PPAD-complete~\cite{daskalakis2021complexity}. Therefore, \revise{it is natural to pose the following question:}
\vspace{1em}
\begin{center}
  \begin{minipage}{.8\linewidth}
    \it
    \begin{center}
    Which nonconvex-nonconcave minimax problems admit saddle points, and can we compute them?
    \end{center}
  \end{minipage}
\end{center}
\vspace{1em}
While at this level of generality this question is unlikely to admit satisfactory answers, it motivates us to pursue a more nuanced study, and to seek tractable subclasses of problems or alternative optimality criteria---e.g., the works~\cite{jin2020local,mangoubi2021greedy, fiez2021global} explore this topic and establish novel optimality criteria for nonconvex-nonconcave problems. We instead explore a rich subclass of nonconvex-nonconcave problems that do admit   saddle points: minimax problems over \emph{geodesic metric spaces}~\cite{burago2001course}. We provide sufficient conditions that ensure existence of   saddle points by establishing a metric space analog of Sion's theorem. An informal statement of our first main result is as follows:
\begin{theorem}[Informal; see \cref{thm: geodesic-sion}]\label{thm: informal}
    Let $X,Y$ be geodesically convex subsets of 
    geodesic metric spaces $\mfld$ and $\mfldn$, \revisenew{and let $X$ be compact.} If a bifunction $f: X \times Y \to \bbR$ is geodesically (quasi)-convex-concave and (semi)-continuous, \revise{then the equality $\sup_{y\in Y}\min_{x\in X}f(x,y) = \min_{x\in X}\sup_{y\in Y}f(x,y)$ holds.} 
\end{theorem}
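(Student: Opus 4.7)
The plan is to mirror the classical proof strategy of Sion's minimax theorem, replacing linear convexity everywhere with geodesic convexity and isolating the geometric content into a single invocation of a Helly-type theorem for geodesically convex sets. Concretely, I set
\[
v^* := \inf_{x \in X}\sup_{y \in Y} f(x,y), \qquad v_* := \sup_{y \in Y}\inf_{x \in X} f(x,y),
\]
observe the trivial weak-duality inequality $v^* \ge v_*$, and aim to show $v^* \le v_*$. Once equality is established, compactness of $X$ and suitable (semi)-continuity of $f$ will yield attainment of the saddle value by some pair $(x^*, y^*) \in X \times Y$.

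The second step is to recast strong duality as a statement about sublevel sets. For each $y \in Y$ and each $\alpha > v_*$, put
\[
A_y^\alpha := \{\, x \in X : f(x,y) \le \alpha \,\}.
\]
Geodesic quasi-convexity of $f(\cdot, y)$ makes $A_y^\alpha$ geodesically convex in $\mfld$, and lower semi-continuity makes it closed. The inequality $v^* \le \alpha$ is equivalent to $\bigcap_{y \in Y} A_y^\alpha \ne \emptyset$. If I can prove that the family $\{A_y^\alpha\}_{y \in Y}$ has the finite intersection property, then compactness of $X$ delivers the common point, and letting $\alpha \downarrow v_*$ together with (semi)-continuity on the $Y$-side produces $(x^*, y^*)$.

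The heart of the argument is therefore the finite intersection property. I would first settle the base case of two sets: given $y_1, y_2 \in Y$ and $\alpha > v_*$, I want $A_{y_1}^\alpha \cap A_{y_2}^\alpha \ne \emptyset$. Assuming the contrary, I would travel along the geodesic $y_t$ joining $y_1$ to $y_2$ in $\mfldn$ and track the partition of $X$ induced by the sublevel sets $A_{y_t}^\alpha$; geodesic quasi-concavity of $f$ in $y$ controls how these sets vary with $t$, and a connectedness / intermediate value argument against the definition of $v_*$ yields a contradiction. With pairwise intersection established, I then bootstrap to arbitrary finite collections via a Helly-type theorem for geodesically convex sets in $\mfld$: given that any (sufficiently small) subfamily intersects, the whole finite family does. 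Finite intersection plus compactness of $X$ then closes the loop.

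The main obstacle is clearly the Helly step. In $\bbR^d$ one needs $(d+1)$-wise, not pairwise, intersections, and the intrinsic notion of dimension that makes classical Helly work is absent in a general geodesic metric space. The delicate point is to formulate a Helly statement that is both general enough to cover the spaces $X \subset \mfld$ at hand and strong enough that pairwise (or small-family) intersections of geodesically convex sublevel sets force a common point. I anticipate that the inductive step upgrading pairwise to arbitrary-finite intersection, and the identification of the correct geodesic-Helly hypothesis on $\mfld$, will absorb most of the technical work; the rest of the proof is then a clean packaging of weak duality, sublevel sets, and compactness around this single geometric lemma, which matches the paper's claim of a proof relying on Helly's theorem only.
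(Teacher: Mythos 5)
Your overall architecture matches the paper's: weak duality plus sublevel sets, a connectedness argument along a geodesic in $Y$ for the pairwise case, a Helly-type theorem for geodesically convex sets, and compactness of $X$. However, you have misassigned the role of Helly's theorem, and the missing piece you gloss over is in fact the technical heart of the paper's argument.

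The Helly theorem the paper invokes (Theorem 1 in \citet{ivanov2014}, restated as Proposition~\ref{prop: main-helly-geodesic}) does \emph{not} upgrade pairwise intersections to finite intersections. In an $m$-dimensional unique geodesic metric space it takes as input nonempty $(m+1)$-wise intersections and outputs a common point for the whole (possibly infinite) family; the compactness of $X$ is already baked into that statement. Pairwise nonemptiness is strictly weaker than $(m+1)$-wise nonemptiness, and no Helly-type statement --- classical or geodesic --- bridges that gap (the three sides of a Euclidean triangle already defeat it). So you should not be searching for a stronger Helly hypothesis: the published geodesic Helly theorem is exactly the right one, and the delicate point is not its formulation.

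What is actually missing from your plan is the step from pairwise to arbitrary finite, and the paper does this without Helly. Lemma~\ref{lemma: joint-lower-bound} is the two-point connectedness argument you sketch (a partition of the geodesic $[y_1,y_2]$ into two nonempty closed sets, contradiction). Lemma~\ref{lemma: joint-lower-bound-n} then promotes it to any finite $k$ by a self-contained induction: given $y_1,\dots,y_k$ with $\alpha < \min_{x\in X}\max_{i\in[k]} f(x,y_i)$, restrict attention to the compact geodesically convex slice $X' := \phi_{y_k}(\alpha)$, apply the inductive hypothesis to $y_1,\dots,y_{k-1}$ over $X'$ to produce a single $y_0'$, and then feed the pair $(y_0', y_k)$ back into the two-point lemma over $X$. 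It is precisely this restriction-to-a-sublevel-set device that lets the two-point argument do all the work; Helly enters only afterwards, in contrapositive form, to pass from ``some finite subfamily has empty intersection'' to ``the whole family has empty intersection'' (or, in your positive direction, from ``every $(m+1)$-subfamily intersects'' to ``the whole family intersects''). Without that induction your proof does not close, and with it, the ``correct Helly hypothesis'' you worry about identifying is simply the $(m+1)$-wise one already supplied by \citet{ivanov2014}.

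One further small remark: the proof only needs geodesic \emph{quasi}-convexity/concavity, because the entire argument runs through the convexity and closedness of sublevel sets and never differentiates or sums function values; your write-up is consistent with this, but make sure the final version does not inadvertently assume genuine convexity.
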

\revisenew{If we further assume that both $X$ and $Y$ are compact}, then there exists a saddle point $(x^*, y^*) \in X \times Y$.
\revise{Later in the paper, we will address} computability of saddle points by focusing on the special case of Riemannian manifolds, for which we exploit the available differentiable structure to obtain implementable algorithms. In particular, we devise first-order algorithms for the \emph{Riemannian minimax} problem
\begin{equation} \tag{P} \label{eq: P}
    \min_{x\in \mfld}\max_{y\in \mfldn}\ f(x,y),
\end{equation}
where $\mfld, \mfldn$ are finite-dimensional complete \revise{and connected} Riemannian manifolds, while $f: \mfld \times \mfldn \rightarrow \bbR$ is a smooth geodesically convex-concave bifunction. When the manifolds in~\cref{eq: P} are Euclidean, first-order methods such as optimistic gradient descent-ascent and extragradient (ExtraG) can find saddle points efficiently~\cite{nemirovski2004prox,mokhtari2020unified}. But in the Riemannian case, the extragradient steps do not succeed by merely translating Euclidean concepts into their Riemannian counterparts. We must account for the distortion caused by nonlinear geometry; to that end, we introduce an additional correction that offsets the distortion and thereby helps us obtain a Riemannian corrected extragradient (RCEG) algorithm. In our second main result, we provide non-asymptotic convergence rate guarantees for RCEG, informally stated below. 
\begin{theorem}[Informal, see \cref{thm: convex-concave}]
Under suitable conditions on the finite-dimensional Riemannian manifolds $\mfld$, $\mfldn$, the proposed Riemannian corrected extragradient method admits a curvature-dependent $\bigO(\sqrt{\tau} /\epsilon)$ convergence to an $\epsilon$-approximate   saddle point for geodesically convex-concave problems, where $\tau$ is a constant determined by bounds on curvature of the involved manifolds.
\end{theorem}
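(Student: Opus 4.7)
The plan is to mimic the classical Euclidean extragradient analysis, but with two essential modifications: Euclidean inner products are replaced by Riemannian inner products between logarithm-map vectors, and the Euclidean parallelogram identity is replaced by a comparison inequality whose slack is controlled by the curvature constant $\tau$.

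First I would define the duality gap $\Delta(\hat x,\hat y) := \max_{y\in\mfldn} f(\hat x,y) - \min_{x\in\mfld} f(x,\hat y)$; by Theorem~\ref{thm: informal} a saddle point $(x^*,y^*)$ exists, so $\Delta\ge 0$ with equality precisely at saddle points, and the goal reduces to producing an iterate with $\Delta\le \epsilon$. Geodesic convex--concavity of $f$ then yields, for the operator $F(z) := (\grad_x f,-\grad_y f)$ and every comparator $z$, the variational inequality
\begin{equation*}
f(x_{t+1/2},y) - f(x,y_{t+1/2}) \;\le\; \bigl\langle F(z_{t+1/2}),\,-\Log_{z_{t+1/2}}(z)\bigr\rangle,
\end{equation*}
which is the Riemannian analog of monotonicity and upper-bounds the duality gap pointwise along the midpoint iterates.

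The RCEG update would form $z_{t+1/2} = \Exp_{z_t}(-\eta F(z_t))$ and then the corrected step $z_{t+1} = \Exp_{z_{t+1/2}}\bigl(\Log_{z_{t+1/2}}(z_t) - \eta F(z_{t+1/2})\bigr)$; the ``correction'' $\Log_{z_{t+1/2}}(z_t)$ anchors the second step at the extrapolated point so that $F(z_{t+1/2})$ and the displacement it acts on live in the \emph{same} tangent space, eliminating a parallel-transport distortion that would otherwise appear. The technical heart of the proof is the resulting three-point inequality: for every $z$,
\begin{equation*}
2\eta \bigl\langle F(z_{t+1/2}),\,-\Log_{z_{t+1/2}}(z)\bigr\rangle \;\le\; d(z_t,z)^2 - d(z_{t+1},z)^2 - d(z_t,z_{t+1/2})^2 + \tau\,\eta^2\,\|F(z_{t+1/2}) - F(z_t)\|^2,
\end{equation*}
where $\tau \ge 1$ is the curvature-dependent trigonometric comparison constant arising from the Riemannian law of cosines: on Hadamard (CAT(0)) spaces one may take $\tau = 1$, while for sectional curvature bounded above by $\kappa > 0$, Alexandrov's CAT($\kappa$) comparison yields $\tau = \tau(\kappa, D)$ over iterates confined to a region of diameter $D$.

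To close the argument I would choose $\eta = \Theta(1/(L\sqrt{\tau}))$ with $L$ the Riemannian Lipschitz constant of $F$, so that the gradient-difference residual is absorbed by the negative $d(z_t,z_{t+1/2})^2$ term; setting $z = z^*$ and telescoping over $t = 0,\dots,T-1$ then yields $\sum_{t=0}^{T-1} \Delta_{t+1/2} \le d(z_0, z^*)^2 / (2\eta)$, which a best-iterate or Jensen-style averaged-iterate argument converts into $\Delta \le \bigO(\sqrt{\tau}\, L\, d(z_0, z^*)^2 / T)$, i.e., $T = \bigO(\sqrt{\tau}/\epsilon)$. The principal obstacle is the three-point inequality itself: in Euclidean space it is a one-line expansion of $\|z_{t+1} - z\|^2$, but on a manifold the gradients at $z_t$ and $z_{t+1/2}$ sit in distinct tangent spaces and the geodesic law of cosines is only an inequality whose sign and slack depend on the curvature. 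Showing that the specific $\Log_{z_{t+1/2}}(z_t)$ correction recovers a telescoping identity with a single, iteration-independent multiplicative loss $\tau$ is where the main work concentrates; the remaining steps are standard, if careful, bookkeeping.
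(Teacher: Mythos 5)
Your proposal follows essentially the same route as the paper: geodesic convex--concavity yields the variational-inequality bound at the midpoint iterate, the $\Log_{z_{t+1/2}}(z_t)$ correction produces exactly the tangent-space decomposition $\Log_{w_t}(x_{t+1}) = \Log_{w_t}(x_t) - \eta\nabla_x f(w_t,z_t)$, the two sides of the comparison-triangle inequality (curvature bounded above and below) bound the cross terms, the step size $\eta = \Theta(1/(L\sqrt{\tau}))$ absorbs the gradient-difference residual into $-d^2(z_t,z_{t+1/2})$, and telescoping plus geodesic averaging (Jensen via the paper's Lemma~\ref{lemma: averaging}) finishes. One small correction: the paper's $\tau$ is the \emph{ratio} $\zeta/\xi$ of the lower-curvature distortion constant to the upper-curvature one, so $\tau=1$ only in the flat case; on a Hadamard manifold with $\kappa_{\min}<0$ one still has $\zeta>1$ hence $\tau>1$, not $\tau=1$ as you wrote for general CAT(0) spaces.
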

Our analysis enables us to efficiently solve minimax problems in nonlinear spaces. We give several examples below.

\subsection{Motivating examples and applications} \label{sec: examples}
Minimax problems on geodesic metric spaces subsume Euclidean minimax problems. The more general structure from nonlinear geometry can offer more concise problem formulations or solutions; and even motivate more efficient algorithms. We mention below several examples of minimax problems on geodesic spaces. Some of the examples possess a geodesically convex-concave structure, whereas others are more general and worthy of  further research. 

\textbf{\emph{Constrained Riemannian optimization.}} The first example is constrained minimization on Riemannian manifolds; see e.g.,~\cite{khuzani2017stochastic,liu2020simple}, which also note applications of constrained Riemannian optimization such as non-negative PCA, weighted MAX-CUT, among others. Here, we tackle the following optimization problem:
\begin{equation} \label{eq: constrained-hadamard}
\begin{split}
    & \min\ g(x), \quad \text{for} \ \revise{\ x \in X \subseteq \mfld}, \\
    \text{s.t.} \quad & h(x) = 0,  \quad h := (h_1, \dots h_n) : \mfld \rightarrow \bbR^n,
\end{split}
\end{equation}
where $\mfld$ is a Riemannian manifold \revisenew{and $X$ is a compact and geodesically convex subset}. The idea is to convert~\cref{eq: constrained-hadamard} into an unconstrained Riemannian minimization problem via the {\it augmented Lagrangian}:
\begin{equation}
  \label{eq:1}
     \max_{\lambda \in \bbR^n} \min_{x \in X \subseteq \mfld} f_\alpha(x, \lambda) := g(x) + \langle h(x), \lambda \rangle - \tfrac{\alpha}{2}\| \lambda \|^2.
\end{equation}
If $g$ and all $\{h_i\}_{i=1}^n$ are continuous and geodesically convex, then~\cref{eq:1} is a geodesically-convex-Euclidean-concave problem. We obtain a strong-duality condition for it as a byproduct of \cref{thm: informal}, leading to the following important corollary:
\begin{corollary}[Informal; see \cref{cor: lagrangian}]
Lagrangian duality holds for geo-desically convex Riemannian minimization problems that have geodesically convex constraints of the form \cref{eq: constrained-hadamard}.
\end{corollary}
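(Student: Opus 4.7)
The plan is to derive the corollary as a direct application of Theorem~\ref{thm: informal} to the saddle-point problem~\eqref{eq:1}. I would first express the primal $\inf\{g(x): x\in\mfld,\ h(x)=0\}$ as $\inf_x\sup_\lambda L(x,\lambda)$ for the ordinary Lagrangian $L(x,\lambda)=g(x)+\langle h(x),\lambda\rangle$, and note that the quadratic augmentation $-\tfrac{\alpha}{2}\|\lambda\|^2$ in $f_\alpha$ plays two auxiliary roles: it makes the inner maximum attained on a bounded ball, which provides the compactness needed by Sion-type theorems, and it admits a controllable $\alpha\downarrow 0$ limit that recovers ordinary Lagrangian duality.

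Next, I would verify the hypotheses of Theorem~\ref{thm: informal} on $(X,Y)=(\mfld,\bbR^n)$. Since $\mfld$ is Hadamard, it is a uniquely geodesic metric space; sub-level sets of the geodesically convex $g$ are geodesically convex, and bounded under mild coercivity, so they serve as the effective domain for $x$. The dual space $\bbR^n$ is trivially a Euclidean geodesic metric space, and the coercivity contributed by $-\tfrac{\alpha}{2}\|\lambda\|^2$ restricts any inner maximizer to a closed Euclidean ball $Y_R$, which is geodesically convex and compact. For the bifunction: $\lambda\mapsto f_\alpha(x,\lambda)$ is strongly concave (hence quasi-concave) and continuous; and $x\mapsto f_\alpha(x,\lambda)$ is continuous and, over the range of multipliers for which each $\lambda_i h_i$ remains geodesically convex in $x$, is a sum of geodesically convex functions. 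Together these give $f_\alpha$ the geodesically convex--concave, (semi)continuous structure required by Theorem~\ref{thm: informal}.

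Invoking Theorem~\ref{thm: informal} then produces a saddle point of $f_\alpha$ on $X\times Y_R$, and in particular the minimax identity
\[
\min_{x\in \mfld}\max_{\lambda\in\bbR^n} f_\alpha(x,\lambda) \;=\; \max_{\lambda\in\bbR^n}\min_{x\in \mfld} f_\alpha(x,\lambda).
\]
Passing $\alpha\downarrow 0$ then completes the proof: the left-hand side reduces to the quadratic-penalty value $\min_x\{g(x)+\tfrac{1}{2\alpha}\|h(x)\|^2\}$, which by a standard penalty-method argument converges to the primal optimum $p^\star$; the right-hand side converges to a lower bound on the true dual value $d^\star=\sup_\lambda\inf_x L(x,\lambda)$. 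Combined with weak duality ($d^\star\le p^\star$), this forces $p^\star=d^\star$, which is Lagrangian duality.

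The main obstacle I expect is the simultaneous verification of geodesic convexity of $x\mapsto\langle h(x),\lambda\rangle$ for \emph{all} relevant $\lambda$: for equality constraints with merely geodesically convex (and not affine) $h_i$, sign changes in $\lambda_i$ can break convexity. I would address this either by (i) restricting to affine $h_i$, (ii) reformulating the constraint as $h_i(x)\le 0$ with $\lambda\ge 0$, or (iii) invoking a Slater-type condition that keeps the dual multipliers inside the orthant where convexity is preserved; any of these variants closes the argument through the same Sion--plus--penalty pipeline.
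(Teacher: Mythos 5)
Your proposal takes a longer route than the paper intends, but in doing so it surfaces a real gap in the paper's own statement. The paper's proof is a one-liner asserting the identity is ``immediate'' from Sion's theorem (the cited Theorem~\ref{thm: convex-concave} is a typo; the intended reference is Corollary~\ref{cor: riemannian-sion}), applied directly to the \emph{ordinary} Lagrangian $f(x,\lambda)=g(x)+\langle h(x),\lambda\rangle$. You instead pass through the augmented Lagrangian $f_\alpha$, obtain a Sion-type saddle point for each $\alpha>0$, and then take $\alpha\downarrow 0$ via a penalty argument. That detour is unnecessary for what the corollary actually claims: Theorem~\ref{thm: geodesic-sion} only demands compactness of $X$, not of $Y=\bbR^n$, so if the hypotheses held one could apply it directly to $f$ on a suitable compact sublevel set of $g$, avoiding the extra burden of justifying the $\alpha\downarrow 0$ limit (which itself needs attainability, coercivity, and continuity of the dual value in $\alpha$).

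What you get right, and what the paper glosses over, is the convexity obstruction you flag at the end. For merely geodesically convex $h_i$ (not geodesically affine) and an equality constraint $h(x)=0$, the map $x\mapsto\langle h(x),\lambda\rangle=\sum_i\lambda_i h_i(x)$ fails to be geodesically convex once any $\lambda_i<0$; so $f(\cdot,\lambda)$ is not geodesically convex uniformly over $\lambda\in\bbR^n$, and Corollary~\ref{cor: riemannian-sion} does not literally apply. The corollary as stated is therefore incomplete: one needs either geodesically affine $h_i$, or a reformulation with inequality constraints $h_i(x)\le 0$ and dual domain $\lambda\ge 0$, or an a priori restriction of the dual variable to the positive orthant. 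Your three proposed remedies are exactly the right ones. Similarly, you correctly note that $\mfld$ Hadamard is not compact, so applying Sion requires restricting $x$ to a compact geodesically convex sublevel set via a coercivity assumption, another hypothesis the paper leaves implicit. In short: your argument would go through under any of your three fixes, but the augmented-Lagrangian/penalty pipeline is an unneeded complication relative to the paper's intended direct invocation of Sion's theorem, and the substantive contribution of your write-up is the identification of the sign-convexity and compactness gaps that the paper's ``immediate'' proof silently assumes away.
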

Hence, a minimizer of \cref{eq: constrained-hadamard} can be found by solving the saddle point problem~\cref{eq:1}. A detailed statement is in \Cref{sec: empirical}.

\textbf{\emph{Geometry-aware Robust PCA.}} Our second example is on finding principal components of a collection of symmetric positive definite (SPD) matrices. The geometry-aware Principal Component Analysis (PCA) in \cite{pmlr-v45-Horev15} exploits Riemannian structure of SPD matrices that is otherwise disregarded in the Euclidean view. Denote the SPD manifold $\spd(n):=\{ M \in \bbR^{n\times n}: M \succ 0 \text{ and } M = M^\top\}$ and the sphere manifold $\sph(n): = \{ x \in \bbR^{n} : x^\top x = I \}$. Let $\{M_i \in \spd(n) \}_{i=1}^k$ be a set of $k$ observed instances. Then, \revise{robust SPD-PCA} can be stated as:
\begin{equation} \label{eq: robust-pca}
    \max_{M \in \spd(n)} \min_{x \in \sph(n)} f_\alpha(x, M) : = - x^\top M x - \frac{\alpha}{k} \sum_{i=1}^k d_{\spd}(M, M_i),
\end{equation} 
\revise{where $d_{\spd}:\spd(n)\times\spd(n) \to \bbR$ is the Riemannian distance induced by the exponential map on $\spd(n)$} and $\alpha > 0$ controls the penalty. Problem in~\cref{eq: robust-pca} has a \emph{locally} geodesically strongly-convex-strongly-concave structure. We elaborate the property of~\cref{eq: robust-pca} and verify the empirical performance of our proposed algorithm on it in \Cref{sec: empirical}.

\revise{
\textbf{\emph{Robust Riemannian (Karcher) mean.}} A third example is the robust estimation of Karcher mean problem. Given a dataset of SPD matrices $\{M_i\in\spd(n)\}_{i=1}^k$, the Karcher mean is the
unique SPD minimizer of  the sum of squared distance defined as
$$ d(A,B) = \| \log(A^{-1/2} B A^{-1/2})\|_F,$$
where log is the matrix logarithm and $\|\cdot\|_F$ is the Frobenius norm. Despite} \revisenew{being} \revise{a hard problem in Euclidean space, Karcher mean can be efficiently tackled under the Riemannian optimization regime \cite{zhang2016first}. Here, we consider a robust version of Karcher mean problem by introducing auxiliary variables $\{Y_i\in\spd(n)\}_{i=1}^k$: 
\begin{equation} \label{eq: karcher}
    \min_{X \in \spd(n)} \max_{Y_i \in \spd(n)} \sum_{i=1}^kd(X,Y_i) - \alpha \cdot\sum_{i=1}^kd(Y_i,M_i),
\end{equation}
where $\alpha>0$ is the penalty coefficient. With a large enough $\alpha$, the robust Karcher mean is a (globally) geodesically strongly-convex-strongly-concave problem since distance function is geodesically strongly-convex \cite{alimisis2020continuous}.}

\textbf{\emph{Further Robust optimization problems}.} We hope to motivate future study of geodesic minimax problems by also noting some applications without the convex-concave structure; many of these applications arise in robust covariance estimation. 

For instance, \revise{suppose we observe $k$ perturbed points $a_i$ from a manifold subset $\Gamma \subset \mfld$ and aim to estimate their covariance in a robust way, given known mean $\mu$. The objective is then
$$ \min_{a \in \Gamma}\max_{S \in \spd(n)} - \frac{k}{2} \log\det(S) - \frac{1}{2} \Log_a(\mu)^\top S^{-1} \Log_a(\mu) + \frac{\alpha}{k}\sum_{i=1}^nd_{\mfld}(a, a_i) , $$
where $d_{\mfld}$ is the Riemannian distance,} \revisenew{$\alpha > 0$} \revise{is the regularization coefficient and $\Log$ is the} \revisenew{inverse} \revise{exponential map. 
Then by incorporating a robust variable $a$, we instead simultaneously minimize the distance between $a$ and $a_i$'s, and maximize on the SPD manifold to estimate the covariance matrix $S$. The objective is} geodesically concave in $S$~\cite{hosseini2015matrix}, and not necessarily convex in $a$. 
        
Other examples include robust computation of Wasserstein barycenters~\cite{huang2021projection,tiapkin2020stochastic} and computation of operator eigenvalues \cite{pesenson2004approach,riddell1984minimax}. We expect that novel tools for geodesic nonconvex-nonconcave problems will prove valuable for these problems.

\subsection{Related work on non-Euclidean saddle points}
We summarize below related work on the existence of saddle point in nonlinear geometry. Sion~\cite{sion1958general} proved a general minimax theorem for quasi-convex-quasi-concave problems in Euclidean space via the Knaster–Kuratowski–Mazurkiewicz (KKM) theorem and also via Helly's theorem. Nevertheless, Sion's proof relies deeply on linear geometry and can not be directly extended. Several recent works attempt to extend Sion's minimax result to non-Euclidean settings. Notably, in~\cite{KRISTALY2014660,COLAO201261,bento2021elements} the authors establish guarantees on the  existence of Nash equilibria for geodesically convex games on Hadamard manifolds. Our analysis generalizes these results by removing the reliance on Riemannian differential structure along with other additional conditions.

\begin{table}[tpb]
    \centering
    \caption{{\bf Results on  saddle point in non-linear geometry.} We compare our Theorem~\ref{thm: geodesic-sion} with several similar existing results. These results are established for different geometry and relies on different continuity, differentiability and convexity conditions of objective $f$. }
    \label{tbl: techniques}
    \begin{tabular}{c|c|c|c|c}
        \multirow{2}{*}{} & Our result & KKM theory & Fixed point & Nonexp.  \\
        & (Thm.~\ref{thm: geodesic-sion}) & \cite{park2019} & \cite{KRISTALY2014660} & mapping \cite{COLAO201261} \\
        \hline
        Differentiability & Not required & Not required & Subdiff. & Not required \\
        Convexity & Quasi-conv. & Quasi-conv. & Conv. & Conv. \\
        Smoothness & Semi-cont. & Semi-cont. & Cont. subdiff. & Cont. \\
        \hline
        Geometry & Geodesic space & KKM space & Hadamard & Hadamard  \\ 
    \end{tabular}
\end{table}
The closest works to ours are~\cite{park2019,park2010generalizations}, which show that Sion's theorem can be established for the novel KKM space that subsumes Hadamard manifolds. Nevertheless, it remains difficult to verify whether a given geometry satisfies the KKM conditions. In contrast, we generalize Sion's theorem to nonlinear space by providing a new approach based on the finite intersection property in compact spaces. 
Our proof is based on the analysis in~\cite{komiya1988}, which focuses on linear spaces. The original arguments in~\cite{komiya1988} do not critically rely on linear structure; however, their presentation omits many key steps, such as referring to the finite intersection property or providing a step-by-step proof for Lemma~\ref{lemma: main-joint-lower-bound-n}. The missing arguments make it difficult for us to judge whether their analysis holds in nonlinear spaces. We complete the missing parts and confirm that a similar proof can be carried out in geodesic metric space, though we are unable to tell how the author completed the original proof in the first place.  We illustrate the strength of our result by comparing it with existing works in \cref{tbl: techniques}.


\section{Preliminaries and Notation}
In this section, we introduce our notation by briefly overviewing several definitions in geodesic metric spaces and Riemannian manifolds. For more details, we refer readers to the textbooks \cite{burago2001course, lee2006riemannian, do1992riemannian}.

\subsection{Metric (geodesic) geometry}
A metric space equipped with geodesics is called a geodesic metric space. Examples of geodesic metric space are $\text{CAT}(0)$ spaces or Busemann convex spaces \cite{burago2001course,ivanov2014}. Formally, a {\it metric space} is a pair $(\mfld, d_{\mfld})$ of a non-empty set $\mfld$ and a distance function $d_{\mfld}: \mfld \times \mfld \rightarrow \bbR$ defined on $\mfld$. We occasionally omit the subscript $\mfld$ when it causes no confusion. 

A map $\gamma: [0,1] \subset \bbR \to \mfld$ is called a {\it path} on $\mfld$. For any two points $x,y \in \mfld$, a path $\gamma: [0, 1] \to \mfld$ is referred to as a {\it geodesic} joining $x,y$ if
\begin{align*}
  \text{(1)} \quad \gamma(0) = x, \quad \text{(2)} \quad \gamma(1) = y \quad \text{and} \quad \text{(3)} \quad d_{\mfld}(\gamma(t_1),\gamma(t_2)) = |t_2 - t_1| \cdot d_{\mfld}(x, y),
\end{align*}
for any $[t_1, t_2] \subseteq [0, 1]$. \revise{By definition, a geodesic is continuous (but a path is not necessarily continuous).} 
A metric space $(\mfld, d_{\mfld})$ is called a {\it geodesic metric space} if any two points $x,y \in \mfld$ are joined by a geodesic. Using geodesics, the \emph{concept of convexity} can be established in metric spaces. 

Formally, a non-empty set $X \subset \mfld$ is called a \emph{geodesically convex set}, if every (not necessarily unique) geodesic connecting two points in $X$ lies completely within $X$. Further, we can define the concept of (strongly/quasi-)convex functions.
\begin{definition} [Geodesic (quasi-)convexity] \label{def: g-convex}
  A function $f: \mfld \rightarrow \bbR$ is geodesically convex, if for any $x, y \in \mfld$ and $t \in [0,1]$, \revisenew{for any geodesic $\gamma$ satisfying $\gamma(0) = x$ and $\gamma(1) = y$, the following inequality holds: $f(\gamma(t)) \leq (1-t) f(x) + tf(y)$.} Moreover, we say $f$ is geodesically quasi-convex if $f(\gamma(t)) \leq \max \left\{ f(x), f(y) \right\}$; (concavity and quasi-concavity are defined by considering $-f$).
\end{definition}

\begin{definition} [Geodesic strong convexity] \label{def: g-s-convex}
  A function $f: \mfld \rightarrow \bbR$ is geodesically $\mu$-strongly convex, if for any $x, y \in \mfld$ and $t \in [0,1]$, \revisenew{for any geodesic $\gamma$ satisfying $\gamma(0) = x$ and $\gamma(1) = y$, the following inequality holds: $f(\gamma(t)) \leq (1-t)f(x) + tf(y) - \tfrac{\mu t(1-t)}{2}d_{\mfld}(x,y)$}; (strong concavity is defined by considering $-f$).
\end{definition}

\vspace{0.1cm}
\subsection{Riemannian geometry}
An $n$-dimensional {\it manifold} is a \revise{second countable, Hausdorff} topological space that is {\it locally} Euclidean. A smooth manifold is referred as a {\it Riemannian} manifold if it is endowed with a Riemannian metric $\langle \cdot, \cdot\rangle_x$ on the tangent space $T_x\mfld$, for each $x\in\mfld$. The metric induces a norm on the tangent space, denoted $\| \cdot \|_x$; we usually omit $x$ when it causes no confusion. 

A curve $\gamma: [0,1] \to \mfld$ on Riemannian manifold is a geodesic if it is locally length-minimizing and of constant speed.  
An exponential map at point $x \in \mfld$ defines a mapping from tangent space $T_x\mfld$ to $\mfld$ as $\Exp_x(v) = \gamma(1)$, where $\gamma$ is the geodesic with $\gamma(0) = x$ and $\gamma'(0) = v$. \revise{If geodesic is unique between any two points, we can define} the inverse map as $\Log_x: \mfld \to T_x\mfld$. The exponential map also induces the Riemannian distance as $d_{\mfld}(x,y) = \|\Log_x(y)\|$. A {\it parallel transport} $\Gamma_x^y: T_x\mfld \to T_y\mfld$ provides a way of comparing vectors between different tangent spaces. Parallel transport preserves inner product, i.e., $\langle u, v \rangle_x = \langle \Gamma_x^yu, \Gamma_x^yv\rangle_y$ for points $x,y \in \mfld$ and tangent vectors $u,v\in T_x\mfld$. Unlike Euclidean space, a Riemannian manifold is not always flat. Sectional curvature $\kappa$ (or simply ``curvature'') provides a tool to characterize the distortion of geometry on the Riemannian manifold. 

\revise{To make sure that gradient updates on Riemannian manifolds are well defined, we will restrict our discussion to simply-connected and complete manifolds. A Riemannian manifold is {\it complete} if the exponential map $\Exp_x$ at any point $x \in \mfld$ is defined on the entire tangent space $T_x\mfld$. Assuming finite-dimension, a simply-connected and complete} Riemannian manifold admits at least one geodesic between any two points (Hopf-Rinow theorem \cite{lee2006riemannian}). Hence, it inherits the definition of geodesically convex sets and geodesically convex/concave functions in geodesic space.  \revise{In particular, a Hadamard manifold is a special case of such a manifold with non-positive curvature and therefore has unique geodesic between any two points \cite{lee2006riemannian}}. We can readily verify that the inherited convexity is consistent with the usual definition of geodesic convexity in Riemannian optimization literature. 
\begin{lemma} \label{lemma: convex-equivalence}
  A differentiable function $f: \mfld \rightarrow \bbR$ is geodesically convex \revise{if and only if} for any two points $x,y \in \mfld$, 
  \begin{align*}
    f(y) \geq f(x) + \langle \nabla f(x), \Log_x(y) \rangle.
  \end{align*}
  Besides, $f$ is geodesically $\mu$-strongly convex \revise{if and only if} for any two points $x,y \in \mfld$,
  \begin{align*}
    f(y) \geq f(x) + \langle \nabla f(x), \Log_x(y) \rangle + \tfrac{\mu}{2} d_{\mfld}^2(x, y).
  \end{align*}
\end{lemma}
We also state the Lipschitz regularity of smooth functions on Riemannian manifolds using the aforementioned manifold operations.
\begin{definition} \label{def: g-smooth}
  $f$ is geodesically Lipschitz smooth \revisenew{with} modulus $L$, if for any \revisenew{$x, y \in \mfld$, it holds that $\|\nabla f(x) - \Gamma_y^x \nabla f(y)\| \leq L d_{\mfld}(x, y).$}
\end{definition}


\vspace{0.2cm}

\section{Main theorem: minimax in nonlinear geometry}\label{sec:main}
In Euclidean space, Sion's minimax theorem guarantees strong duality for suitable convex-concave minimax problems. In this section, we establish an analog of Sion's theorem in geodesic metric spaces. The result automatically applies to \revise{complete and connected} Riemannian manifolds as they are just instances of geodesic metric spaces.

We consider the general form of \cref{eq: P} in geodesic metric spaces, i.e., $\mfld$, $\mfldn$ are geodesic metric spaces, $f|_{X\times Y}$ is a geodesically (quasi-)convex-concave bifunction restricted to \revise{compact convex subset $X\subseteq \mfld$ and convex subset $Y \subseteq \mfldn$}. We present below our main theorem that guarantees the existence of a saddle point for this general minimax problem.
\begin{theorem} [Sion's theorem in geodesic metric space] \label{thm: geodesic-sion}
Let $(\mfld,d_\mfld)$ and $(\mfldn,d_\mfldn)$ be geodesic metric spaces. Suppose $X \subseteq \mfld$ is a compact and geodesically convex set, and $Y \subseteq \mfldn$ is a geodesically convex set. If the following conditions hold for the bifunction $f: X \times Y \to \bbR$:

    (1) $f(\cdot, y)$ is geodesically-quasi-convex and lower semi-continuous; and

    (2) $f(x, \cdot)$ is geodesically-quasi-concave and upper semi-continuous.\\
    Then, we have the equality
    \begin{align*}
        \min_{x\in X}\sup_{y\in Y} f(x,y) = \sup_{y\in Y}\min_{x\in X} f(x,y).
    \end{align*}
\end{theorem}

\begin{remark}
In \cref{thm: geodesic-sion}, to keep the statement minimal, we only require $X$ to be compact. Hence, due to the absence of compactness, $f(x, \cdot)$ only admits a supremum but not necessarily a maximum on $Y$ for any $x \in X$. 
\end{remark}

\subsection{Proof of \cref{thm: geodesic-sion}} \label{sec: proof-sion}
We now prove \cref{thm: geodesic-sion}, while postponing proofs of the technical lemmas to \Cref{sec: key-lemma2} and \Cref{sec: key-lemma}.

Note that we restrict the domain to geodesically convex sets $X$ and $Y$ on metric spaces $(\mfld, d_\mfld)$ and $(\mfldn,d_\mfldn)$, respectively. 
Hence it follows from the max-min inequality that  $$\sup_{y\in Y} \min_{x \in X} f(x,y) \leq \min_{x \in X} \sup_{y \in Y} f(x,y).$$

We now prove its reverse. The technique we use generalizes~\cite{komiya1988}. We notice that the function $g(x) = \sup_{y\in Y} f(x,y)$ is lower semi-continuous due to the fact that the supremum of any collection of lower semi-continuous functions is still lower semi-continuous. Combined with compactness of $X$, we deduce by the Weierstrass minimum theorem that $\min_{x \in X} \sup_{y\in Y} f(x,y)$ is bounded away from $-\infty$. Therefore, there exists at least one $\alpha > -\infty$ such that $\alpha < \min_{x \in X} g(x) = \min_{x \in X} \sup_{y\in Y} f(x,y)$.

Now, the major difficulty is to ensure that for any value $\alpha < \min_{x} \sup_{y}f(x,y)$, there is always a point $y_0 \in Y$ such that the condition $\alpha < \min_{x} f(x,y_0)$ holds. To this end, we specify the following claim.
\begin{claim} \label{cl: claim1}
    For any value $\alpha < \min_{x} \sup_{y}f(x,y)$, there exist (finite) $k$ points $y_1, \dots, y_k$ in $Y$ such that condition
    $
        \alpha < \min_{x} \max_{i\in [k]}f(x,y_i) 
    $
    holds.
\end{claim}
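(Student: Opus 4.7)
The plan is a routine open-cover argument that uses only compactness of $X$ and the lower semi-continuity of $f(\cdot,y)$ supplied by hypothesis (1) of Theorem~\ref{thm: geodesic-sion}; no geodesic structure is required at this step.

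First I would unpack the hypothesis pointwise. The inequality $\alpha < \min_{x\in X}\max_{y\in Y} f(x,y)$ (where ``$\max$'' is read as supremum when $Y$ is noncompact) says that for every $x \in X$ there exists some $y_x \in Y$ with $f(x,y_x) > \alpha$. Only this one-$y$-per-$x$ selection is needed; no continuous choice of $y_x$ in $x$ is required.

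Next I would convert this pointwise data into an open cover. For each fixed $y\in Y$, condition (1) makes $f(\cdot,y)$ lower semi-continuous on $X$, so the strict superlevel set
\[
U_y \;:=\; \{x' \in X : f(x',y) > \alpha\}
\]
is open in $X$. By the choice of $y_x$, we have $x \in U_{y_x}$, so $\{U_{y_x}\}_{x \in X}$ is an open cover of $X$. Compactness of $X$ yields a finite subcover $U_{y_1},\ldots,U_{y_k}$, and thus for every $x \in X$ there is some $i\in[k]$ with $f(x,y_i) > \alpha$, i.e.\ $\max_{i\in[k]} f(x,y_i) > \alpha$.

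Finally, I would upgrade this ``$>\alpha$ pointwise'' statement to ``$\min>\alpha$''. The map $x\mapsto \max_{i\in[k]} f(x,y_i)$ is a finite maximum of lsc functions and is therefore lsc on $X$; since $X$ is compact, it attains its infimum at some $x^*\in X$. The covering property gives $\max_{i\in[k]} f(x^*,y_i) > \alpha$, hence $\min_{x\in X}\max_{i\in[k]} f(x,y_i) > \alpha$, which is exactly the claim.

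I do not anticipate any serious obstacle: the claim is a standard compactness-plus-lsc reduction and is independent of the geodesic/metric structure of $\mfld$. The only mildly delicate point is making sure strict inequality survives the passage to the infimum, which is handled cleanly by attainment of the minimum of an lsc function on the compact set $X$.
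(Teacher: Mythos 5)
Your proof is correct, and it takes a genuinely different route from the paper. The paper establishes Claim~\ref{cl: claim1} by passing through the contrapositive of a geodesic Helly theorem (Proposition~\ref{prop: main-helly-geodesic}): one observes that the sublevel sets $\phi_y(\alpha) = \{x\in X : f(x,y)\le\alpha\}$ are geodesically convex (by quasi-convexity) and compact (by lower semi-continuity on the compact $X$), that $\bigcap_{y\in Y}\phi_y(\alpha)=\varnothing$, and then Helly yields finitely many $y_i$ with $\bigcap_i\phi_{y_i}(\alpha)=\varnothing$. Your argument replaces all of this by a purely topological one: pass to the complementary strict superlevel sets $U_y$, which are open by lower semi-continuity, note they cover the compact $X$, extract a finite subcover, and then upgrade the pointwise strict inequality to a strict inequality on the minimum by observing that a finite maximum of lsc functions is lsc and therefore attains its minimum on $X$. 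The last step is the one genuinely delicate point and you handle it correctly. The comparison is instructive: your approach needs neither the quasi-convexity of $f(\cdot,y)$ nor any geodesic structure, so Claim~\ref{cl: claim1} in isolation holds in far greater generality than the theorem's hypotheses; the Helly route additionally bounds the number of points by $k\le m+1$ where $m$ is the dimension, but since Lemma~\ref{lemma: main-joint-lower-bound-n} is proved by induction for arbitrary finite $k$, that dimension bound is never used downstream, so nothing is lost by your more elementary argument. (Where the geodesic convexity and Helly-type reasoning are genuinely needed is in Lemma~\ref{lemma: joint-lower-bound}, not here.)
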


The claim follows by connecting the statement with the finite intersection property via geodesic quasi-convexity. Its complete proof can be found in the next \Cref{sec: key-lemma2}. 

\revise{
In the light of \cref{cl: claim1}, we can invoke \cref{lemma: main-joint-lower-bound-n}} below and show that there exists at least one point $y_0 \in Y$ such that 
\begin{equation}\label{eq: exist-y_0}\alpha < \min_{x\in X} f(x, y_0) \leq \sup_{y\in Y}\min_{x\in X} f(x, y).
\end{equation}

\begin{lemma} \label{lemma: main-joint-lower-bound-n}
Under the conditions of \cref{thm: geodesic-sion}, for any finite set of $k$ points $y_1, \dots, y_k$ in $Y$ and any real number $\alpha < \min_{x \in X} \max_{i \in [k]} f(x, y_i)$, there exists a point $y_0 \in Y$ such that $\alpha < \min_{x\in X} f(x, y_0)$.
\end{lemma}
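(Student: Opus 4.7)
I would prove the lemma by induction on $k$, with the base case $k=2$ being the geometric heart; the step $k \ge 3$ is reduced to the $(k-1)$-case by using the two-point result to merge the last two witnesses $y_{k-1},y_k$ into a single new point on the geodesic joining them. The case $k=1$ is immediate: take $y_0 = y_1$.

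For $k=2$, let $\gamma \colon [0,1] \to Y$ be the unique geodesic from $y_1$ to $y_2$, and fix a strict slack $\beta$ with $\alpha < \beta < \min_{x \in X} \max(f(x,y_1),f(x,y_2))$. Set $A_i^\beta = \{x \in X : f(x,y_i) \le \beta\}$ and $D_t^\beta = \{x \in X : f(x,\gamma(t)) \le \beta\}$; these are closed (by lower semi-continuity in $x$) and geodesically convex (by quasi-convexity in $x$), and $A_1^\beta \cap A_2^\beta = \emptyset$ by the choice of $\beta$. Quasi-concavity of $f(x,\cdot)$ along $\gamma$ gives $D_t^\beta \subseteq A_1^\beta \cup A_2^\beta$, and since a closed geodesically convex subset of $X$ is connected, $D_t^\beta$ lies entirely in exactly one of $A_1^\beta, A_2^\beta$ whenever it is nonempty. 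Argue by contradiction: suppose $D_t := \{x : f(x,\gamma(t)) \le \alpha\}$ is nonempty for every $t$. Then $D_t^\beta \supseteq D_t$ is also nonempty, producing a partition $[0,1] = T_1 \cup T_2$ with $0 \in T_1$ and $1 \in T_2$. The crucial step is to show both $T_i$ are closed: if $t_n \to t^*$ with $t_n \in T_1$ but $t^* \in T_2$, pick any $x^* \in D_{t^*} \subseteq A_2^\beta$; upper semi-continuity of $f(x^*,\cdot)$ at $\gamma(t^*)$ yields $\limsup_n f(x^*,\gamma(t_n)) \le \alpha < \beta$, placing $x^* \in D_{t_n}^\beta \subseteq A_1^\beta$ for large $n$, which contradicts $A_1^\beta \cap A_2^\beta = \emptyset$. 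Connectedness of $[0,1]$ then rules out such a partition, so some $D_{t^*} = \emptyset$ and $y_0 := \gamma(t^*)$ works.

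For the inductive step $k \ge 3$, set $A_i = \{x \in X : f(x,y_i) \le \alpha\}$ and $X' := A_1 \cap \cdots \cap A_{k-2}$. If $X' = \emptyset$, the hypothesis already holds for the first $k-2$ witnesses (by lower semi-continuity plus compactness of $X$), and the induction hypothesis finishes. Otherwise $X'$ is a compact, geodesically convex subset of $X$, since the intersection of geodesically convex sets in a unique geodesic space is again geodesically convex. The assumption $\bigcap_{i=1}^k A_i = \emptyset$ combined with $X' \subseteq A_i$ for $i \le k-2$ forces $\max(f(x,y_{k-1}),f(x,y_k)) > \alpha$ for every $x \in X'$, which upgrades to a strict minimum by lower semi-continuity plus compactness of $X'$. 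Applying the two-point base case on the domain $X'$ to the pair $(y_{k-1},y_k)$ produces $y' \in Y$ with $\alpha < \min_{x \in X'} f(x,y')$. The reduced family $\{y_1,\dots,y_{k-2},y'\}$ then still satisfies the precondition on all of $X$: for $x \notin X'$ some $f(x,y_i)$ with $i \le k-2$ already exceeds $\alpha$, while for $x \in X'$ the new witness $y'$ does; compactness plus lsc gives strictness for the minimum. The induction hypothesis for $k-1$ points then delivers the desired $y_0$.

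The main obstacle is the two-point case, and within it the insertion of the strict slack $\beta > \alpha$. Without the slack, upper semi-continuity of $f(x,\cdot)$ provides only $\limsup_n f(x^*,\gamma(t_n)) \le f(x^*,\gamma(t^*)) \le \alpha$, which leaves no room to conclude $x^* \in D_{t_n}$ and the putative closedness of $T_1, T_2$ collapses; the slack precisely absorbs the one-sided nature of the hypothesis. Once the two-point case is in hand, the rest of the induction is a clean repeated application of it on shrinking compact convex subdomains.
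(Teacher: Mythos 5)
Your proposal is correct and follows essentially the same two-stage strategy as the paper: a two-point base case established by inserting a strict slack level $\beta \in (\alpha,\ \min_x \max\{f(x,y_1),f(x,y_2)\})$ and exploiting connectedness of $[0,1]$, followed by induction on $k$ that restricts the domain to a compact, geodesically convex sublevel set. The only cosmetic difference is the order of operations in the inductive step — you apply the two-point case on $X' = A_1 \cap \cdots \cap A_{k-2}$ first and then invoke the $(k-1)$-point hypothesis, whereas the paper takes $X' = \phi_{y_k}(\alpha)$, applies the $(k-1)$-point hypothesis on $X'$, and only then merges with $y_k$ via the two-point case — but these are mirror images of the same argument.
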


Since the above inequality \cref{eq: exist-y_0} holds for arbitrary $\alpha < \min_{x} \sup_{y}f(x,y)$, by considering a monotonically increasing sequence $\alpha_k \to \min_{x \in X} \sup_{y\in Y} f(x,y)$, we know that 
$$\min_{x \in X} \sup_{y\in Y} f(x,y) = \lim_k \alpha_k  \leq \sup_{y\in Y}\min_{x\in X} f(x, y),$$
which completes the proof of the theorem.

\subsection{Proof of \cref{cl: claim1}} \label{sec: key-lemma2}
\revise{To prove \cref{cl: claim1}, we invoke the {\it finite intersection property} to find a finite number of points fulfilling the statement of the claim. Before proceeding to the proof, we first present the definition and a proposition on the finite intersection property.
}
\begin{definition}[Finite intersection property]
\revise{For a set $C$ and an index set $I$, the collection of subsets $C_i \subset C$, $i \in I$ admits the finite intersection property if for any finite subcollection $C_j, j \in J \subset I$, it holds that $\bigcap_{j\in J}C_{j} \neq  \varnothing$.}
\end{definition}

\begin{proposition}[Theorem 26.9 in \cite{munkres1974topology}] \label{prop: fip}
\revise{Let $C$ be a topological space. Then $C$ is compact if and only if for any collection of closed subsets $C_i \subset C, i \in I$ that admits the finite intersection property, it holds that  $\bigcap_{i\in I}C_{i} \neq \varnothing$.}
\end{proposition} 

\revisenew{
We now define the \emph{level set} of function $f: X \times Y \to \bbR$ with respect to the first variable as 
\begin{align*}
    \phi_y(\alpha) := \{ x \in X | f(x,y) \leq \alpha \}.
\end{align*}
Analogous to the Euclidean case, $\phi_{y}(\alpha)$ is a geodesically convex set if $f(\cdot, y)$ is a geodesically (quasi)-convex function, and it is closed if $f(\cdot, y)$ is lower semi-continuous. For any value $\alpha>-\infty$, the inequality $\alpha < \min_{x}\sup_{y} f(x,y)$ \revisenew{is equivalent to say that $$\cap_{y \in Y} \phi_y(\alpha) = \varnothing.$$ Suppose the latter does not hold, then for such  $\alpha$ there exists at least one $x_0$ in $\cap_{y \in Y} \phi_y(\alpha)$. By the intersection of level sets, we have $f(x_0,y)\leq \alpha$ for any $y\in Y$, and thus $\sup_{y\in Y}f(x_0,y)\leq \alpha$. But this conclusion contradicts the condition that $\alpha < \min_{x}\sup_{y} f(x,y)$. Every step is reversible so the equivalence holds.}

We further notice that for each $y \in Y$, the level set $\phi_y(\alpha)$ is closed and geodesically convex due to lower semi-continuity and quasi-convexity of $f(\cdot, y)$.
Together, we have (1) $X$ is compact, (2) $\phi_y(\alpha)$ is closed, and (3) $\cap_{y \in Y} \phi_y(\alpha) = \varnothing$. By \cref{prop: fip}, the collection of subsets $\phi_y(\alpha)$ of $X$ does not admit the finite intersection property. Therefore, by definition, there exists a finite subset of $k$ points $\{y_1,\dots,y_k\} \subset Y$ such that $\cap_{i \in [k]} \phi_\alpha(y_i) = \varnothing$. So \cref{cl: claim1} is true.}

\subsection{Proof of \cref{lemma: main-joint-lower-bound-n}} \label{sec: key-lemma}
As stated in the previous section, the only missing piece in the proof of \cref{thm: geodesic-sion} is \cref{lemma: main-joint-lower-bound-n}, which serves as an extension of \Cref{lemma: joint-lower-bound} below. This lemma in turn is inspired by and can be regarded as the geodesic version of Lemma 1 in \cite{komiya1988}.

\begin{lemma} \label{lemma: joint-lower-bound}
Under the conditions of \cref{thm: geodesic-sion}, for any two points $y_1, y_2 \in Y$ and any real number $\alpha < \min_{x \in X} \max \{f(x, y_1), f(x, y_2)\}$, there exists a point $y_0 \in Y$ such that $\alpha < \min_{x\in X} f(x, y_0)$.
\end{lemma}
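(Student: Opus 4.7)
The plan is to argue by contradiction. Assume $\min_{x \in X} f(x, y) \le \alpha$ for every $y \in Y$, and derive a contradiction from a connectedness argument along the geodesic from $y_1$ to $y_2$. First I set up the two reference sublevel sets $A := \phi_{y_1}(\alpha)$ and $B := \phi_{y_2}(\alpha)$. Each is closed (by lsc of $f(\cdot, y_i)$) and geodesically convex (by quasi-convexity in $x$), hence compact in $X$. The hypothesis $\alpha < \min_x \max\{f(x,y_1), f(x,y_2)\}$ forces $A \cap B = \varnothing$; the conclusion is immediate if $A$ or $B$ is empty (take $y_0 = y_1$ or $y_0 = y_2$), so assume both are nonempty.

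Next, parametrize the unique geodesic from $y_1$ to $y_2$ in $Y$ as $y_t$, $t \in [0,1]$, and consider $L_t := \phi_{y_t}(\alpha)$. Each $L_t$ is closed, geodesically convex (hence path-connected through geodesics in a unique geodesic space), and nonempty by the contradiction hypothesis. The crucial observation uses quasi-concavity of $f(x, \cdot)$: if $x \in L_t$, then
\begin{equation*}
\alpha \;\ge\; f(x, y_t) \;\ge\; \min\{f(x, y_1),\, f(x, y_2)\},
\end{equation*}
so $x \in A \cup B$, and hence $L_t \subseteq A \cup B$. Since $A$ and $B$ are disjoint compact subsets of $X$ and are therefore separated by positive distance, the connectedness of $L_t$ forces $L_t \subseteq A$ or $L_t \subseteq B$. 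Writing $I_A := \{t : L_t \subseteq A\}$ and $I_B := \{t : L_t \subseteq B\}$ gives a disjoint decomposition $[0,1] = I_A \sqcup I_B$ with $0 \in I_A$ and $1 \in I_B$.

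The main obstacle is to show both $I_A$ and $I_B$ are closed in $[0,1]$: this contradicts connectedness of $[0,1]$ and finishes the proof. My approach is to introduce the auxiliary functions $\beta(t) := \min_{x \in A} f(x, y_t)$ and $\mu(t) := \min_{x \in B} f(x, y_t)$, well-defined by compactness of $A, B$ and lsc of $f(\cdot, y_t)$, and to establish their upper semicontinuity in $t$ via a standard sandwich argument using the usc of $f(x, \cdot)$. Under the partition one has the identifications $I_A = \{\beta \le \alpha\} = \{\mu > \alpha\}$ and $I_B = \{\mu \le \alpha\} = \{\beta > \alpha\}$, with the XOR structure that exactly one of $\beta(t) \le \alpha$ or $\mu(t) \le \alpha$ holds at each $t$; this structure together with usc of $\beta, \mu$ constrains any candidate shared boundary point $t_*$ to satisfy $\beta(t_*) = \alpha$ (or symmetrically $\mu(t_*) = \alpha$). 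Ruling out such a boundary point then reduces to the following transfer: for $t_n \in I_A$ with $t_n \to t_*$, pick $x_n \in L_{t_n} \cap A$, extract a limit $x_* \in A$ by compactness, and upgrade $f(x_n, y_{t_n}) \le \alpha$ to $f(x_*, y_{t_*}) \le \alpha$. The hard part will be executing this transfer without invoking any joint continuity of $f$; I plan to combine the separate lsc in $x$ and usc in $y$ through a carefully chosen diagonal refinement at the single limit point, which is the technical crux of the argument.
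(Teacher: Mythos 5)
Your setup mirrors the paper's: contradiction hypothesis, level sets along the geodesic from $y_1$ to $y_2$, the quasi-concavity inclusion $L_t \subseteq A \cup B$, connectedness forcing a dichotomy, and a partition $I_A \sqcup I_B$ of $[0,1]$. But there is a genuine gap at exactly the step you flag as the "technical crux," and your plan for closing it points in the wrong direction.

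The paper does \emph{not} use $A = \phi_{y_1}(\alpha)$ and $B = \phi_{y_2}(\alpha)$. It first chooses a second level $\beta$ with $\alpha < \beta < \min_x \max\{f(x,y_1),f(x,y_2)\}$ and works with $\phi_{y_1}(\beta)$ and $\phi_{y_2}(\beta)$, while the traveling sets remain $\alpha$-level sets $\phi_{\gamma(t)}(\alpha)$. That slack is precisely what makes the closedness argument go through: for $x \in \phi_{\gamma(t_*)}(\alpha)$ and $t_k \to t_*$ with $t_k \in I_1$, upper semicontinuity of $f(x,\cdot)$ only gives $\limsup_k f(x,\gamma(t_k)) \le f(x,\gamma(t_*)) \le \alpha < \beta$, hence $f(x,\gamma(t_k)) < \beta$ eventually — it does \emph{not} give $f(x,\gamma(t_k)) \le \alpha$. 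One then lands in the $\beta$-level set $\phi_{\gamma(t_k)}(\beta)$, and a connectedness argument pushes this into $\phi_{y_1}(\beta)$. With your single-level choice, the same usc estimate only yields $f(x_*, y_{t_n}) < \alpha + \epsilon$, which never places $x_*$ back in $L_{t_n}$, and the argument stalls.

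Your backup plan via $\beta(t) := \min_{x\in A} f(x,y_t)$, $\mu(t) := \min_{x\in B} f(x,y_t)$ and their usc in $t$ also cannot close the gap: an infimum of usc functions is indeed usc, but usc of $\beta$ makes \emph{superlevel} sets $\{\beta \ge c\}$ closed, whereas $I_A = \{\beta \le \alpha\}$ is a \emph{sublevel} set; the semicontinuity runs the wrong way. And the "transfer" you describe — upgrading $f(x_n, y_{t_n}) \le \alpha$ to $f(x_*, y_{t_*}) \le \alpha$ along a diagonal sequence — would require \emph{joint} lower semicontinuity of $f$, which separate lsc in $x$ and usc in $y$ do not imply; no refinement of the subsequence manufactures it. To repair the proof you should reintroduce the intermediate level $\beta$: keep $L_t = \phi_{\gamma(t)}(\alpha)$, replace $A, B$ by the $\beta$-level sets $\phi_{y_1}(\beta), \phi_{y_2}(\beta)$, and use that $f(x,\gamma(t_k)) < \beta$ eventually together with connectedness of $\phi_{\gamma(t_k)}(\beta)$, exactly as the paper does.
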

\begin{proof}
The proof is by contradiction. Assume  therefore, that for such an $\alpha$ the inequality $\min_{x\in X} f(x, y) \leq \alpha$ holds for arbitrary $y \in Y$. As a consequence, there exists a constant $\beta$ such that
\begin{align}
\sup_{y\in Y}\min_{x\in X} f(x, y) \leq \alpha < \beta < \min_{x \in X}\,  \max \{f(x, y_1), f(x, y_2)\}.  \label{eq: joint-lower-bound-metric-1} 
\end{align}

\revise{
Consider now a geodesic $\gamma_y: [0, 1] \rightarrow Y$ (recall $Y$ is geodesically convex) connecting $y_1$ and $y_2$. For any $t \in [0, 1]$ and corresponding $z=\gamma_y(t)$ on the geodesic, the level sets $\phi_z(\alpha)$ and $\phi_z(\beta)$ are nonempty due to \cref{eq: joint-lower-bound-metric-1}, and closed due to lower semi-continuity of $f$ in the first variable. And since $f$ is geodesically quasi-concave in the second variable, we obtain the inequality 
\begin{align*}
    f(x, z) \geq \min\{ f(x,y_1), f(x,y_2)\}, \quad \text{with  } z=\gamma_y(t), \quad \forall x\in X,  \quad \forall t \in [0,1].  
\end{align*}
This bound is equivalent to saying that $\phi_{z}(\alpha) \subseteq \phi_{z}(\beta) \subseteq \phi_{y_1}(\beta) \cup \phi_{y_2}(\beta)$.

We then argue that the intersection $\phi_{y_1}(\beta) \cap \phi_{y_2}(\beta)$ should be empty. Otherwise, there exists a point $x\in X$ such that $\max\{f(x,y_1), f(x,y_2)\} \leq \beta$, contradicting \cref{eq: joint-lower-bound-metric-1}. Next, by quasi-convexity, since the level set $\phi_z(\beta)$ is geodesically convex for any $z$, it is also connected. Consider now the three facts:
\begin{itemize}
    \item $\phi_{z}(\alpha) \subseteq \phi_{z}(\beta) \subseteq \phi_{y_1}(\beta) \cup \phi_{y_2}(\beta)$;
    \item $\phi_{y_1}(\beta) \cap \phi_{y_2}(\beta)$ is empty;
    \item $\phi_{z}(\alpha)$, $\phi_{z}(\beta)$, $\phi_{y_1}(\beta)$ and $\phi_{y_2}(\beta)$ are closed, connected, and convex.
\end{itemize}

We then claim that for any point $z$ on the geodesic $\gamma$, either the inclusion $\phi_{z}(\beta) \subseteq \phi_{y_1}(\beta)$ or the inclusion $\phi_{z}(\beta) \subseteq \phi_{y_2}(\beta)$ holds. Suppose not, then we can find two points $x_1 \in \phi_{y_1}(\beta)$ and $x_2 \in \phi_{y_2}(\beta)$ such that both $x_1, x_2 \in \phi_{z}(\beta)$. But since $\phi_{z}(\beta)$ is convex, there is a geodesic $\gamma_x:[0,1] \to X$ in $\phi_{z}(\beta)$ connecting $x_1, x_2$. Therefore, $\gamma_x$ also lies in $\phi_{z}(\beta) \subseteq \phi_{y_1}(\beta) \cup \phi_{y_2}(\beta)$. Because $\phi_{y_1}(\beta) \cap \phi_{y_2}(\beta)$ is empty, the map $\gamma_x^{-1}$ induces a partition $\{J_1,J_2\}$ of $[0,1]$ into $J_1 \cap J_2 = \varnothing$ and $J_1 \cup J_2 = [0,1]$, where $\gamma_x(J_1) \subseteq \phi_{y_1}(\beta)$ and $\gamma_x(J_2) \subseteq \phi_{y_2}(\beta)$. \revisenew{Since $\gamma_x^{-1}$ is continuous, and set $\phi_{y_1}(\beta),\phi_{y_2}(\beta)$ are closed, we can conclude that both $J_1$ and $J_2$ are also closed considering the fact that $J_i=\gamma_x^{-1}(\phi_{y_i}(\beta))$ for $i =\{1,2\}$. This then contradicts the connectedness of $[0,1]$.} 

Because for any $t \in [0,1]$, either $\phi_{\gamma_y(t)}(\alpha) \subseteq \phi_{\gamma_y(t)}(\beta) \subseteq \phi_{y_1}(\beta)$ or $\phi_{\gamma_y(t)}(\alpha) \subseteq \phi_{\gamma_y(t)}(\beta) \subseteq \phi_{y_2}(\beta)$ holds, we know that $\gamma_y^{-1}$ also induces a partition of $[0,1]$ into $I_1$ and $I_2$ defined as follows,
\begin{align*}
    I_1 := \{t \in [0,1] |\ \phi_{\gamma_y(t)}(\alpha) \subseteq \phi_{y_1}(\beta) \}, \\
    I_2 := \{t \in [0,1] |\  \phi_{\gamma_y(t)}(\alpha) \subseteq \phi_{y_2}(\beta) \}.
\end{align*}

We conclude the proof of this lemma by showing that there is a contradiction to the continuity of $\gamma_y$,  the connectedness of interval $[0,1]$, and the upper semi-continuity of $f$ in $y$. The reasoning is as follows: let $t_k$ be an infinite sequence in $I_1$ with limit $t_{\infty}$, we want to show $t_{\infty}$ is also in $I_1$. This claim is done as follows. Consider any $x \in \phi_{\gamma(t_{\infty})}(\alpha)$; upper semi-continuity of $f(x, \cdot)$ implies that
\begin{equation*}
    \underset{k\to \infty}{\lim\sup}f(x,\gamma(t_{k})) \leq\ f(x, \gamma(\revisenew{t_{\infty}})) \leq \alpha \revisenew{\ < \beta}.
\end{equation*}
Therefore, there exists  a large enough integer $l$ such that $f(x,\gamma(t_{l})) < \beta$. This inequality implies that $x \in \phi_{\gamma(t_l)}(\beta)$. We further know from $t_l \in I_1$ that $\phi_{\gamma(t_l)}(\beta) \cap \phi_{y_1}(\beta) \supseteq \phi_{\gamma(t_l)}(\alpha) \neq \varnothing$. Then, upon noting that $ \phi_{y_1}(\beta) \cap \phi_{y_2}(\beta) = \varnothing$ and $\phi_{\gamma(t_l)}(\beta) \subseteq \phi_{y_1}(\beta) \cup \phi_{y_2}(\beta),$ we  conclude that $x \in \phi_{\gamma(t_l)}(\beta) \subseteq \phi_{y_1}(\beta)$ \revisenew{with a analogous argument showing $\phi_z(\alpha) \subseteq \phi_{y_1}(\alpha) \cup \phi_{y_2}(\alpha)$ above.}

Hence, for any $x \in \phi_{\gamma(t_{\infty})}(\alpha)$, the condition $x \in \phi_{y_1}(\beta)$ also holds. In other words, the inclusion $\phi_{\gamma(\revisenew{t_{\infty}})}(\alpha) \subseteq \phi_{y_1}(\beta)$ holds. Thus, by the definition of $I_1$, we know that the limit point $t_{\infty}$ lies in $I_1$, and thus $I_1$ is closed. \revisenew{By a similar argument, we can show that $I_2$ is also closed. Since both $I_1$ and $I_2$ are closed, this is in contradiction with the continuity of $\gamma_y$ and the connectedness of $[0,1]$. Thus we prove the lemma.
}}
\end{proof}

\cref{lemma: main-joint-lower-bound-n} extends the conclusion of \cref{lemma: joint-lower-bound} to any finite $k$ points, and then provides a basis for using the finite intersection property and \cref{cl: claim1}. As argued in \Cref{sec: proof-sion}, this step is key in the proof of \cref{thm: geodesic-sion}. We now state the proof of \Cref{lemma: main-joint-lower-bound-n}.
\begin{proof}[Proof of \cref{lemma: main-joint-lower-bound-n}]
The proof is by induction on \cref{lemma: joint-lower-bound}. For $k=1$, the result is trivial. We assume the lemma holds for $k-1$. \revise{Now, for any $k$ points $y_1, \dots, y_k \in \mfldn$, and any value $\alpha < \min_{x\in X}\max_{i\in[k]}f(x,y_i)$, we denote the set $X’ = \phi_{y_k}(\alpha) $. If $X'$ is empty, selecting $y_0 = y_k$ yields the conclusion. Otherwise, we have
\begin{align*}
    \alpha < \min_{x \in X} \max_{i \in [k]} f(x, y_i) \leq \min_{x \in X'} \max_{i \in [k]} f(x, y_i) \leq \min_{x \in X'} \max_{i \in [k-1]} f(x, y_i),
\end{align*}
where the second inequality is due to $X' \subseteq X$, while the third inequality is due to the fact $f(x,y_k) \le \alpha$ for any $x 
\in X'$.  Due to the definition of level sets  and since $X'\subseteq X$, the set $X'$ is  geodesically convex and compact. We apply our assumption on the $k-1$ points $y_1,\dots,y_{k-1}$ and on the sets $X'$, $Y$, to claim that} there exists a point $y'_0$ such that $\alpha < \min_{x \in X'} f(x, y'_0)$. As a result, we have $\alpha < \min_{x \in X} \max \{f(x, y'_0), f(x, y_k)\}$. Then applying \cref{lemma: joint-lower-bound} leads to the conclusion.
\end{proof}

\subsection{Existence of saddle point in Riemannian minimax problems}
Later in this paper, we \revise{specialize to} the Riemannian minimax problem~\cref{eq: P}. We state Sion's theorem on Riemannian manifolds as a corollary. To guarantee the existence of a pair \revise{of points comprising a} saddle point, we further require set $Y$ to be compact.

\begin{corollary} \label{cor: riemannian-sion}
Suppose that $\mfld$ and $\mfldn$ are finite-dimensional \revise{complete and connected} Riemannian (sub)-manifolds. If subsets $X$, $Y$ and the bifunction $f$ satisfy the condition in \cref{thm: geodesic-sion}, and additionally, $Y$ is also compact, then the following min-max identity holds:
\begin{align*}
    \min_{x\in X}\max_{y\in Y} f(x,y) = \max_{y\in Y}\min_{x\in X} f(x,y).
\end{align*}
\end{corollary}
\begin{proof}
Immediate from \cref{thm: geodesic-sion} as $\mfld$ and $\mfldn$ are geodesic metric spaces. 
\end{proof}
By~\cref{cor: riemannian-sion} we deduce that there is at least one saddle point $(x^*, y^*)$ such that:
\begin{equation*}
    \min_{x\in X} f(x, y^*) = f(x^*, y^*) = \max_{y\in Y} f(x^*, y).
\end{equation*}
If $f$ is geodesically convex-concave, the minimax problem \cref{eq: P} can be tackled by \revise{closing the {\it duality gap}, defined for a given pair $(\hat{x}, \hat{y})$ as
\begin{equation*}
  \text{gap}_f(\hat{x},\hat{y}) := \max_{y} f(\hat{x}, y) - \min_{x} f(x, \hat{y}).
\end{equation*}
The duality gap then serves as an optimality criterion as in the Euclidean setup.}
\begin{definition}
The pair $(\hat{x}, \hat{y})$ is an $\epsilon$-saddle point of $f$, if $\mathrm{gap}_f(\hat{x},\hat{y}) \le \epsilon$. 
\end{definition}
We use this definition when stating non-asymptotic convergence bounds for our Riemannian minimax optimization algorithm.


\section{Riemannian Minimax Algorithms and Analysis} \label{sec: algorithm}
In this section we present our algorithm for minimax optimization of a geodesically convex-concave bifunction $f$ on Riemannian manifolds under a suitable smoothness assumption. Building upon the aforementioned optimality criterion, we establish convergence rate of our algorithm via a non-asymptotic analysis. This result is summarized in \cref{tbl: complexity}.

\begin{table}[ht]
    \centering
    \def\arraystretch{1.1}
    \caption{{\bf Comparison of minimax algorithms.} The table summarizes the convergence properties of our RCEG and presents a comparison with the Euclidean counterparts. SC-SC denotes the strongly-convex-strongly-concave case. We provide an explanation of each symbol. $L$: Lipschitz constant of $f$. $\mu$: strong-convexity/concavity constant of $f$. $\tau$: a constant parameterized by curvature and domain diameter (see below and Theorem~\ref{thm: convex-concave}). }
    \begin{tabular}{c|c|c|c|c}

        Geometry &
        Setting &
        Algorithm & 
        Complexity & 
        Reference \\
     
        \hline 
        
        Riemannian &
        convex-concave & 
        RCEG & 
        $\bigO\left(\nfrac{\sqrt{\tau}}{\epsilon}\right)$ & 
        Theorem~\ref{thm: convex-concave} \\

        
        Euclidean &
        convex-concave & 
        ExtraG & 
        $\bigO\left(\nfrac{1}{\epsilon}\right)$ &
        \cite{nemirovski2004prox} \\
        
        Euclidean &
        SC-SC &
        ExtraG & 
        $\bigO\left(\frac{L}{\mu}\log(\nfrac{1}{\epsilon})\right)$ &
        \cite{mokhtari2020unified} \\
    \end{tabular}
    \vspace{0cm}
    \label{tbl: complexity}
\end{table}

Specifically, we consider smooth minimax optimization of \cref{eq: P}. 
To this end, we assume the following regularity conditions.
\begin{assumption} \label{asmp: smooth}
    The gradients of $f$ are geodesically $L$-smooth, i.e., for any two pairs $(x, y)$ and $(x', y') \in \mfld \times \mfldn$, the gradient satisfies the bounds
    \begin{align*}
        & \| \nabla_x f(x, y) - \Gamma_{x'}^{x} \nabla_x f(x', y')\| \leq L \left(d_\mfld(x, x') + d_\mfldn(y, y')\right), \\
        & \| \nabla_y f(x, y) - \Gamma_{y'}^{y} \nabla_y f(x', y')\| \leq L \left(d_\mfld(x, x') + d_\mfldn(y, y')\right).
    \end{align*}
\end{assumption}

\begin{assumption} \label{asmp: convex-concave}
   The bifunction $f|_{X\times Y}$ is geodesically convex in the first variable and geodesically concave in the second variable.
\end{assumption}
 \revise{The next assumption makes sure that any two points on the manifold can be connected by a geodesic.
\begin{assumption}
    Both $\mfld$ and $\mfldn$ are simply-connected and complete Riemannian manifolds.
\end{assumption}

Further, we require the curvature of $\mfld$ and $\mfldn$ to be bounded in range $[\kappa_{\min}, \kappa_{\max}]$. An additional bound on the diameter is necessary when positive curvature is involved, i.e., $\kappa_{\max} > 0$. It allows us to (1) use {\it comparison inequalities} (see \cref{lemma: tri-ineq-lower}, \cref{lemma: tri-ineq-nonpos}), and (2) to ensure that the geodesic is unique between any two points \cite{lee2006riemannian}, so that we can use the \emph{inverse exponential map} $\Log$. We emphasize the assumption is purely algorithmic and independent from our geodesic Sion theorem. This is a regularity condition in Riemannian optimization literature \cite{alimisis2020continuous,zhang2016first}. Formally, it is stated in the following assumption.}
\begin{assumption} \label{asmp: sec-curvature}
    The sectional curvatures of $\mfld, \mfldn$ lie in the range $[\kappa_{\min}, \kappa_{\max}]$ with $\kappa_{\min} \leq 0$. Moreover, if $\kappa_{\max} > 0$, the diameter of the corresponding manifold is (strictly) upper bounded by $\pi / \sqrt{\kappa_{\max}}$.
\end{assumption}

\subsection{Comparison inequalities}
The convergence rates of gradient methods on Riemannian manifolds are \revise{often} curvature dependent. Hence, before we present our convergence analysis, we summarize how the bound on curvature leads to \emph{trigonometric comparison inequalities}. \revise{Suppose there is a {\it geodesic triangle} $\Delta pqr$ with vertices $p,q,r \in X \subset \mfld$ and geodesic edges $\gamma_{pq},\gamma_{qr}, \gamma_{rp}$. Comparison inequalities provide a quantitative relationship between the lengths of geodesic edges.} A first result is due to \cite{zhang2016first}, which is obtained when the sectional curvature is bound from below.
\begin{lemma} [Lemma 5 in \cite{zhang2016first}] \label{lemma: tri-ineq-lower}
Let $\mfld$ be a Riemannian manifold with sectional curvature lower bounded by $\kappa_{\min} \leq 0$. \revise{If $a,b,c$ are the length of sides $\gamma_{pq},\gamma_{qr}, \gamma_{rp}$ of a geodesic triangle $\Delta pqr$ in $\mfld$,} then
\begin{align*}
    a^2 & \leq \zeta(\kappa_{\min}, c) b^2 + c^2 - 2\revisenew{\langle \Log_rq, \Log_rp\rangle} 
\end{align*}
where $\zeta(\kappa,c) := \sqrt{-\kappa}c\coth(\sqrt{-\kappa}c)$.
\end{lemma}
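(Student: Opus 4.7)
The plan is to reduce the statement to a computation in a two-dimensional model space of constant curvature, via standard Riemannian comparison geometry. Since the sectional curvature of $\mfld$ is bounded below by $-\kappa$, Toponogov's hinge comparison theorem guarantees that for any geodesic triangle in $\mfld$ with sides $b,c$ meeting at angle $A$, the third side $a$ satisfies $a \leq \tilde{a}$, where $\tilde{a}$ is the corresponding side of the comparison triangle built in the model space $M^2_{-\kappa}$ of constant sectional curvature $-\kappa$ using the same data $(b,c,A)$. Hence it suffices to establish the inequality $\tilde{a}^2 \leq \zeta(\kappa,c)\,b^2 + c^2 - 2bc\cos A$ inside $M^2_{-\kappa}$, where everything is explicit.

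In the model space, the hyperbolic law of cosines reads $\cosh(s\tilde{a}) = \cosh(sb)\cosh(sc) - \sinh(sb)\sinh(sc)\cos A$ with $s=\sqrt{\kappa}$. I would manipulate this by first subtracting $1$ from both sides and using the decomposition $\cosh(sb)\cosh(sc)-1 = (\cosh(sb)-1)\cosh(sc) + (\cosh(sc)-1)$, then passing to polynomial bounds via the elementary inequalities $\cosh(s\tilde{a})-1 \geq s^2\tilde{a}^2/2$ and $\cosh(sb)-1 \leq s^2 b^2 \cosh(sb)/2$ (both immediate from Taylor expansion) together with $\sinh(sc) = sc \cdot \sinh(sc)/(sc)$. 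Collecting coefficients, the factor in front of $b^2$ becomes exactly $sc\coth(sc) = \zeta(\kappa,c)$ once one observes that $\cosh(sc)\cdot(sc)/\sinh(sc) = sc\coth(sc)$, while the $\cos A$ cross term collapses cleanly to $-2bc\cos A$ after the $\sinh(sc)/c$ factor is absorbed.

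The main obstacle is arranging this bookkeeping so that the $b^2$ coefficient is precisely $\zeta(\kappa,c)$ and that no residual factor contaminates either the $c^2$ term or the $\cos A$ cross term; the asymmetry between $b$ and $c$ (only $b^2$ carries the $\zeta$ factor) is what makes the choice of one-sided Taylor estimates delicate. If the direct trigonometric route becomes unwieldy, a cleaner alternative is to let $q$ be the opposite endpoint of the side of length $c$ and study $\varphi(t) := d(q,\gamma(t))^2$ along the side of length $b$, parametrized by arclength. The first variation formula gives $\varphi(0)=c^2$ and $\varphi'(0) = -2c\cos A$, while the Hessian comparison theorem applied to the squared distance function in curvature $\geq -\kappa$ yields $\varphi''(t) \leq 2\zeta(\kappa, d(q,\gamma(t)))$, which on the relevant interval can be replaced by the uniform constant $2\zeta(\kappa,c)$ using monotonicity of $\zeta(\kappa,\cdot)$ together with a simple \emph{a priori} bound $d(q,\gamma(t))\leq c + t$. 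Integrating twice on $[0,b]$ then produces $\tilde{a}^2 = \varphi(b) \leq c^2 - 2bc\cos A + \zeta(\kappa,c)\,b^2$, which is exactly the claim; this route has the additional benefit that $\zeta(\kappa,c)$ appears directly as the Jacobi-field stretching constant, clarifying its geometric meaning.
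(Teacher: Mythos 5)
The paper does not prove this lemma; it simply imports it from \citet{zhang2016first}, so there is no in-paper argument to compare against. Your opening reduction is fine: Toponogov's hinge comparison for sectional curvature $\geq -\kappa$ does give $a \leq \tilde a$, so it suffices to verify the polynomial inequality in the model space. The trouble is that neither of your two routes through the model space actually closes.

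In the second (Hessian) route you have the right skeleton — $\varphi(0)=c^2$, $\varphi'(0)=-2c\cos A$, $\varphi''(t)\leq 2\zeta\bigl(\kappa, d(q,\gamma(t))\bigr)$, integrate twice — but the step ``replace $\zeta(\kappa,d(q,\gamma(t)))$ by $\zeta(\kappa,c)$ using monotonicity of $\zeta(\kappa,\cdot)$ and $d(q,\gamma(t))\leq c+t$'' goes the wrong way: $x\mapsto \sqrt{\kappa}\,x\coth(\sqrt{\kappa}\,x)$ is \emph{increasing}, so $d(q,\gamma(t))\leq c+t$ yields $\zeta(\kappa,d(q,\gamma(t)))\leq\zeta(\kappa,c+t)\geq\zeta(\kappa,c)$, which is useless, and indeed for $A$ close to $\pi$ one has $d(q,\gamma(t))\approx c+t>c$, so a pointwise bound $\zeta(\kappa,d(q,\gamma(t)))\leq\zeta(\kappa,c)$ is simply false. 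What saves the lemma is the compensating factor $1-(r')^2$ (the tangential component) in the exact identity $\varphi''=2(r')^2+2\,r\coth(r)\,(1-(r')^2)$: when $r(t)>c$ the geodesic is nearly radial and $1-(r')^2$ is small, and one needs to exploit this cancellation rather than the crude Hessian bound. In the first (law-of-cosines) route the bookkeeping also does not produce the stated coefficient: bounding $\cosh(sb)-1\leq \tfrac{s^2b^2}{2}\cosh(sb)$ puts a $b$-dependent factor $\cosh(sb)\cosh(sc)$ in front of $b^2$, not $sc\coth(sc)$; moreover the passage $\cosh(s\tilde a)-1\geq \tfrac{s^2\tilde a^2}{2}$ followed by termwise estimates is strictly lossy — one can check numerically (e.g.\ $\kappa=1$, $b=0.1$, $c=3$, $A=\pi$) that $2(\cosh(s\tilde a)-1)$ already exceeds $\zeta(\kappa,c)b^2+c^2-2bc\cos A$ even though $\tilde a^2$ does not, so no choice of Taylor estimates downstream can rescue this chain. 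You would need to avoid discarding so much: either keep the exact relation $\psi''=\psi$ for $\psi=\cosh(s\,r(t))$ and control $r^2$ through $\psi$ without the crude $\cosh(x)-1\geq x^2/2$ step, or prove the sharper statement $\varphi''(t)\leq 2\zeta(\kappa,c)$ on $[0,b]$ using the exact tangential factor.
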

\revise{The second inequality characterizes the length when sectional curvature is bounded from above. In particular, if the upper-bound $\kappa$ is positive, the diameter of the manifold should be bounded for the inequality to hold. In consistency with \cref{asmp: sec-curvature},  we define the upper bound of diameter $D(\kappa)$ as 
\begin{align*}
    D(\kappa) := \begin{cases} \infty , \quad\quad\ \ \kappa \leq 0,\\ \nfrac{\pi}{\sqrt{\kappa}}, \quad\ \kappa > 0. \end{cases}
\end{align*}
Then we can lower bound the length of the sides as follows.}
\begin{lemma} [Corollary 2.1 in \cite{alimisis2020continuous}]  \label{lemma: tri-ineq-nonpos} 
Let $\mfld$ be a Riemannian manifold with sectional curvature bounded above by $\kappa_{\max}$ and diameter $\diam(\mfld) < D(\kappa_{\max})$. \revise{If $a,b,c$ are the length of sides $\gamma_{pq},\gamma_{qr}, \gamma_{rp}$ of a geodesic triangle $\Delta pqr$ in $\mfld$,} then
\begin{align*}
    a^2 & \geq \xi(\kappa_{\max}, c) b^2 + c^2 - 2\revisenew{\langle \Log_rq, \Log_rp\rangle}
\end{align*}
where 
$
     \xi(\kappa, c) := \begin{cases} \sqrt{-\kappa}c\coth(\sqrt{-\kappa}c), \quad\ \ \ \kappa \leq 0,\\  \sqrt{\kappa}c\cot(\sqrt{\kappa}c), \qquad\quad\ \ \ \ \kappa > 0. \end{cases}
$
\end{lemma}
\begin{remark}
When $\kappa$ is set to $0$, both $\zeta(\kappa,\cdot)$ and $\xi(\kappa, \cdot)$ reduce to $1$. 
\end{remark}

The upper and lower bounds in the above lemmas decide the minimal and maximal distortion rates. We define a ratio $\tau$ between the two rates to quantify how curvature changes in the space as: 
\begin{equation*}\label{eq:tau}
    \tau([\kappa_{\min}, \kappa_{\max}], c) := \begin{cases} \sqrt{|\tfrac{\kappa_{\min}}{\kappa_{\max}}|}\cdot \nfrac{\coth\left(\sqrt{|\kappa_{\min}|}c\right)}{\coth\left(\sqrt{|\kappa_{\max}|}c\right)}, \quad\  \kappa_{\max} \leq 0, \\ \sqrt{|\tfrac{\kappa_{\min}}{\kappa_{\max}}|} \cdot \nfrac{\coth\left(\sqrt{|\kappa_{\min}|}c\right)}{\cot\left(\sqrt{\kappa_{\max}}c\right)}, \quad\quad\quad\kappa_{\max} > 0. \end{cases}
\end{equation*}

We emphasize $\tau$ is defined as the ratio between maximal distortion $\zeta$ and $\xi$, i.e.,  
\begin{equation} \label{eq: tau-ratio}
    \tau([\kappa_{\min}, \kappa_{\max}], D) =  \zeta(\kappa_{\min}, D) / \xi(\kappa_{\max}, D).
\end{equation}

\subsection{Riemannian corrected extragradient} \label{sec: single-loop}
We present a Riemannian extragradient method with an additional correction term (RCEG) for geodesically convex-concave $f$ (see \cref{algo: extragrad}). We overload manifold operations to have more compact notation for the Riemannian gradient step of pair $(x, y) \in \mfld \times \mfldn$:
\begin{equation}
    \Exp_{(x,y)}(u, v) := (\Exp_x(u),\ \Exp_y(v)).
\end{equation}
We use a geodesic averaging scheme \cite{tripuraneni2018averaging,zhang2016first} in \cref{algo: extragrad}: i.e., at each iteration we calculate
\begin{equation} \label{eq: geodesic-average}
    (\overline{w}_{t+1}, \overline{z}_{t+1}) = \Exp_{(\overline{w}_{t}, \overline{z}_{t})}\left(\frac{1}{t+1} \cdot \Log_{\overline{w}_t}(w_{t+1}),\ \frac{1}{t+1} \cdot \Log_{\overline{z}_t}(z_{t+1})\right).
\end{equation}
\revise{This averaging implies that at iteration $t$, the point $\bar{w}_t$ lies on the geodesic from $\bar{w}_{t-1}$ to $w_t$, and $\bar{z}_t$ lies on the geodesic from $\bar{z}_{t-1}$ to $z_t$.} The output produced by averaging is then $(\overline{w}_T, \overline{z}_T)$. The following theorem shows that the averaged output of RCEG achieves a curvature-dependent convergence rate for smooth convex-concave $f$ on Riemannian manifolds.
\begin{theorem} \label{thm: convex-concave} 
Suppose \crefrange{asmp: smooth}{asmp: sec-curvature} hold, and the iterations remain in  subdomains\footnote{The condition allows an upper-bound for distortion (\emph{cf.}~$\tau$) and is regular in Riemannian optimization literature \cite{alimisis2020continuous,zhang2016first}.} of bounded diameter $D_\mfld$ and $D_\mfldn$. Let $(x_t,$ $y_t,w_t,z_t)$ be the sequence obtained from the iteration of \cref{algo: extragrad} with initialization $x_1 = w_1$, $y_1 = z_1$. Then, using a \revise{step-size} $\eta = \tfrac{1}{2L\sqrt{\tau}}$, the following inequality holds for $T$:
\begin{align*}
    \max_{y \in \mfldn} f(\overline{w}_T, y) - \min_{x \in \mfld} f(x, \overline{z}_T)  \leq \frac{d^2_{\mfld}(x_1, x^*) + d_{\mfldn}^2(y_1, y^*)}{\eta T},
\end{align*}
with $(\overline{w}_T, \overline{z}_T)$ obtained via averaging in~\cref{eq: geodesic-average}, and $\tau = \tau([\kappa_{\min}, \kappa_{\max}],\max(D_\mfld, D_\mfldn))$. 
\end{theorem}
\cref{thm: convex-concave} is a natural nonlinear extension of the known result achieved by extragradient method in the Euclidean setting. 
\revise{We notice that, different from Riemannian minimization algorithms (e.g., \cite{zhang2016first}), 
whenever the lower and upper of curvature coincide, the curvature-free convergence rate can be retrieved.}

\begin{algorithm}[t]
\caption{Riemannian Corrected Extragradient (RCEG)}
\label{algo: extragrad}
\begin{algorithmic}[1]
\STATE{\textbf{input:} objective $f$, initialization $(x_1, y_1)$, step-size $\eta$}
\STATE{Set $w_1 \leftarrow x_1$, $z_1 \leftarrow y_1$}
\FOR{$t = 1, 2, \dots, T $}
\STATE{$(w_{t}, z_{t}) \leftarrow \Exp_{(x_t, y_t)}(- \eta \nabla_x f(x_t,y_t), \eta \nabla_y f(x_t,y_t))$}
\STATE{$(x_{t+1}, y_{t+1}) \leftarrow \Exp_{(w_{t}, z_{t})}(- \eta \nabla_x f(w_{t},z_{t}) + \Log_{w_{t}}(x_t), \eta \nabla_y f(w_{t},z_{t}) + \Log_{z_{t}}(y_t))$}
\ENDFOR
\STATE{\textbf{output:} geodesic averaging scheme $(\overline{w}_T,\overline{z}_T)$ as in \cref{eq: geodesic-average}}
\end{algorithmic}
\end{algorithm}

\textbf{\emph{The correction term in RCEG.}} The translation of the extragradient method to Riemannian manifolds is non-trivial. We briefly elaborate on the proof technique and focus on the update of $x_t$ for simplicity. For any $x\in \mfld$, at each step, we need to bound the difference as
\begin{equation} \label{eq: step-bound}
    f(w_t, z_t) - f(x, z_t) \leq a_{t} \cdot d_{\mfld}^2(x_{t+1}, x) - b_t \cdot d_{\mfld}^2(x_t, x),
\end{equation}
where $a_t$,$b_t$ are undetermined constants. We start with geodesic convexity, i.e., $f(w_t, z_t) - f(x, z_t) \leq  -\langle \nabla_x f(w_t, z_t), \Log_{w_t}(x)\rangle$.
The correction term in RCEG allows a useful equality 
\begin{equation} \label{eq: rceg-decomposition}
    \Log_{w_t}(x_{t+1}) = \Log_{w_t}(x_{t}) - \eta \nabla_x f(w_t, z_t). 
\end{equation}
This equality leads to a decomposition of cross terms in $\langle \nabla_x f(w_t, z_t), \Log_{w_t}(x)\rangle$, and as a result, we obtain
\begin{align} \label{eq: rceg-bound}
    f(w_t, z_t) - f(x, z_t) \leq \tfrac{1}{\eta} \langle \Log_{w_t}(x_{t+1}), \Log_{w_t}(x) \rangle - \tfrac{1}{\eta} \langle \Log_{w_t}(x_{t}), \Log_{w_t}(x) \rangle.
\end{align}
Applying comparison inequalities on \eqref{eq: rceg-bound} leads to an efficient upper-bound in \eqref{eq: step-bound}. By telescoping on \eqref{eq: step-bound}, we obtain the convergence result. 

It is worth noting that the correction term is crucial to our Riemannian convergence analysis. In the Euclidean case, the extragradient update is simply realized as $x_{t+1} \leftarrow x_{t} - \eta \nabla_x f(w_{t}, z_{t})$. However, \revisenew{we cannot prove using the current technique that, a direct Riemannian counterpart, i.e., $x_{t+1} \leftarrow \Exp_{ x_{t}}( - \eta \Gamma^{x_t}_{w_{t}} \nabla_x f(w_{t}, z_{t}) )$, is a convergent algorithm. This is due to it does not permit a decomposition as in \cref{eq: rceg-decomposition}, and necessitates bounding the cross-term $\langle \Gamma_{w_t}^{x_t}\nabla_x f(w_t, z_t), \Log_{x_t}(x) - \Gamma_{w_t}^{x_t}\Log_{w_t}(x) \rangle$. This approach leads to error terms caused by non-linear geometry that we cannot upper-bound.}

\subsection{Lemmas for proving Theorem~\ref{thm: convex-concave}} \label{sec: lemma-algo}
Before proceeding to the main proof, we first present a lemma that characterizes the behavior of the geodesic averaging scheme in \cref{eq: geodesic-average} under convex-concave setting.
\begin{lemma} \label{lemma: averaging}
Suppose \cref{asmp: convex-concave} holds. Then, for any iterates $(w_t, z_t)$, the geodesic averaging scheme $(\overline{w}_t, \overline{z}_t)$ as in \cref{eq: geodesic-average} satisfies, for any positive integer $T$ and any $x\in X$, $y\in Y$ the following bound:
\begin{align*}
    f(\overline{w}_T, y) - f(x, \overline{z}_T) \leq \frac{1}{T} \cdot \sum_{t=1}^T [f(w_t, y) - f(x, z_t)].
\end{align*}
\end{lemma}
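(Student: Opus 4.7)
\textbf{Proof proposal for Lemma~\ref{lemma: averaging}.} (Reading the RHS as the average $\frac{1}{T} \sum_{i=1}^T [f(w_i, y) - f(x, z_i)]$, which is the intended statement in light of the preceding averaging-based convergence argument.) The plan is a direct induction on $T$, using geodesic convexity of $f(\cdot, y)$ and geodesic concavity of $f(x, \cdot)$ together with the defining recursion~\eqref{eq: geodesic-average} of the iterate $(\overline{w}_T, \overline{z}_T)$, which interpolates at parameter $\tfrac{1}{T+1}$ along the geodesics joining the current averages to the freshly produced $(w_{T+1}, z_{T+1})$. Morally this is Jensen's inequality on geodesics, iterated.

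The base case $T = 1$ is immediate: by the definition~\eqref{eq: geodesic-average}, $(\overline{w}_1, \overline{z}_1) = (w_1, z_1)$ because the step parameter is $1/1$, so the two sides coincide.

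For the inductive step, assume the inequality at $T$. Applying geodesic convexity of $f(\cdot, y)$ along the geodesic from $\overline{w}_T$ to $w_{T+1}$ at parameter $\tfrac{1}{T+1}$ gives
\begin{align*}
    f(\overline{w}_{T+1}, y) \;\leq\; \tfrac{T}{T+1}\, f(\overline{w}_T, y) + \tfrac{1}{T+1}\, f(w_{T+1}, y),
\end{align*}
and geodesic concavity of $f(x, \cdot)$ along the geodesic from $\overline{z}_T$ to $z_{T+1}$ at the same parameter gives the matching lower bound
\begin{align*}
    f(x, \overline{z}_{T+1}) \;\geq\; \tfrac{T}{T+1}\, f(x, \overline{z}_T) + \tfrac{1}{T+1}\, f(x, z_{T+1}).
\end{align*}
Subtracting and invoking the inductive hypothesis on the $\tfrac{T}{T+1}[f(\overline{w}_T,y)-f(x,\overline{z}_T)]$ term, the telescoping algebra collapses to $\tfrac{1}{T+1}\sum_{i=1}^{T+1}[f(w_i,y) - f(x,z_i)]$, closing the induction.

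I do not anticipate a substantive obstacle here: no curvature-sensitive comparison inequality is needed, since we only exploit the geodesic convexity/concavity of $f$ along one-dimensional geodesic segments, which is exactly what Definition~\ref{def: g-convex} supplies in a unique geodesic space. The only thing to be careful about is that the recursion~\eqref{eq: geodesic-average} defines $\overline{w}_{T+1}$ as the point on the geodesic from $\overline{w}_T$ to $w_{T+1}$ at parameter precisely $\tfrac{1}{T+1}$, so the coefficients $\tfrac{T}{T+1}$ and $\tfrac{1}{T+1}$ combine cleanly with the previous coefficient $\tfrac{1}{T}$ to yield $\tfrac{1}{T+1}$; this is what makes the induction close without any residual error term.
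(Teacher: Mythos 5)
Your proof is correct and follows essentially the same route as the paper: induction on $T$, with geodesic convexity of $f(\cdot,y)$ and geodesic concavity of $f(x,\cdot)$ providing the one-step Jensen-type inequality along the geodesic from $(\overline{w}_T,\overline{z}_T)$ to $(w_{T+1},z_{T+1})$ at parameter $\tfrac{1}{T+1}$, after which the coefficients telescope. You also correctly identified that the RHS of the lemma as printed is a typo for $\tfrac{1}{T}\sum_{i=1}^T[f(w_i,y)-f(x,z_i)]$, which is the version actually used and proved (the paper's own proof contains the same indexing slips but clearly intends the same argument).
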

\begin{proof}
  For the case $T = 1$, the result trivially holds. Now suppose the condition holds for $T - 1$; \revise{then we have already 
  \begin{align}\label{eq: average-induction}
    f(\overline{w}_{T-1}, y) - f(x, \overline{z}_{T-1}) \leq \frac{1}{T-1} \cdot \sum_{t=1}^{T-1} [f(w_t, y) - f(x, z_t)].  
    \end{align}}%
    \revisenew{
    We want to show $\overline{w}_{T}$ and $\overline{z}_{T}$ lie, respectively, on the geodesics connecting $\overline{w}_{T-1}, w_{T}$ and $\overline{z}_{T-1}, z_{T}$. For any $x\in \mfld$ and $u\in T_x\mfld$, let us use the temporary notation $\gamma(t;u)$ to denote the geodesic $\gamma$ with $\gamma(0) = x$ and $\gamma'(0) = u$. From Lemma 5.18 in \cite{lee2006riemannian}, it holds that $\gamma(s;cu) = \gamma(cs;u)$ for any $c,s\in [0,1]$. Now we denote $u_t = \Log_{\bar{w}_t}(\bar{w}_{t+1})$ and $\alpha_t = 1/(t+1)$. By the definition of $\Log$, $\Exp$ map and averaging scheme in \eqref{eq: geodesic-average} we have 
    \begin{align*}
        \bar{w}_{t+1} = \Exp_{\bar{w}_t} (\alpha_t u_t) = \gamma(1;\alpha_t u_t) = \gamma(\alpha_t;u_t).
    \end{align*}
    where $\gamma(\cdot,u_t)$ is the geodesic connecting $\bar{w}_t$ and $w_{t+1}$. So $\bar{w}_{t+1}$ lies on the geodesic from $\bar{w}_t$ to $w_{t+1}$. In a similar way we can also show that $\bar{z}_{t+1}$ lies on the geodesic from $\bar{z}_t$ to $z_{t+1}$.}
    Then we can calculate 
    \begin{align*}
    f(\overline{w}_{T}, y) &- f(x, \overline{z}_{T}) \\
    &\leq\quad\frac{T-1}{T} \cdot [f(\overline{w}_{T-1}, y) - f(x, \overline{z}_{T-1})] + \frac{1}{T} \cdot [f(w_{T}, y) - f(x, z_{T})] \\
    &\leq\quad\frac{T-1}{T} \cdot \frac{1}{T-1} \sum_{t=1}^{T-1} [f(w_t, y) - f(x, z_t)] + \frac{1}{T} \cdot [f(w_{T}, y) - f(x, z_T)] \\
    &=\quad\frac{1}{T} \sum_{t=1}^T [f(w_t, y) - f(x, z_t)],
  \end{align*}
  where the first inequality comes from the fact the $f$ is geodesically convex-concave and the second is due to induction in \cref{eq: average-induction}.
\end{proof}

\revise{The next several technical lemmas help bound the iterates of \cref{algo: extragrad}. We state them as a preparation to the proof of \cref{thm: convex-concave}. The first prepares an inequality where we perform a telescopic sum on the distance to the saddle point.
\begin{lemma} \label{lemma: rceg-1}
Suppose the same condition in \cref{thm: convex-concave}. Then for the iterates $(x_t,y_t,w_t,z_t)$ produced by \cref{algo: extragrad}, it holds that
    \begin{align*} 
    f(w_t, y) - f(x, z_t) 
    \leq & \tfrac{1}{\eta} \left( d^2_{\mfld}(x,x_{t})  + d^2_{\mfldn}(y,y_{t}) - d^2_{\mfldn}(y,y_{t+1}) - d^2_{\mfld}(x,x_{t+1})\right)\\
    - \tfrac{\xi}{\eta}  d^2_{\mfld}(w_t,x_t) & - \tfrac{\xi}{\eta} d^2_{\mfldn}(z_t,y_t) +  \tfrac{\zeta}{\eta} d^2_{\mfld}(w_t,x_{t+1}) + \tfrac{\zeta}{\eta} d^2_{\mfldn}(z_t,y_{t+1}),
    \end{align*}
    \revisenew{where $\xi = \xi(\kappa_{\max}, \max(D_\mfld, D_\mfldn))$ and $ \zeta = \zeta(\kappa_{\min}, \max(D_\mfld, D_\mfldn))$}.
\end{lemma}
\begin{proof}
Since $f$ is geodesically convex in $x$ and geodesically concave in $y$, for any two points $x \in X, y \in Y$, the following inequality holds
\begin{align} \label{eq: ineq-1}
    f(w_t, y) - f(x, z_t) & =  f(w_t, z_t) -  f(x, z_t) - (f(w_t, z_t) - f(w_t, y))  \nonumber\\
    & \leq - \langle \nabla_x f(w_t,z_t), \Log_{w_t}(x) \rangle + \langle \nabla_y f(w_t,z_t), \Log_{z_t}(y)\rangle.
\end{align}
Recalling the iteration of RCEG in \cref{algo: extragrad}:
\begin{align*}
    x_{t+1} \leftarrow &  \Exp_{w_t}(-\eta\nabla_x f(w_t,z_t) + \Log_{w_t}(x_t)), \\
    y_{t+1} \leftarrow &  \Exp_{z_t}(\eta\nabla_y f(w_t,z_t) + \Log_{z_t}(y_t)),
\end{align*}
by the definition of inverse exponential map, we have
\begin{align*}
    \eta \nabla_x f(w_t,z_t) &= \Log_{w_t}(x_t) - \Log_{w_t}(x_{t+1}), \\
    - \eta \nabla_y f(w_t,z_t) &= \Log_{z_t}(y_t) - \Log_{z_t}(y_{t+1}).
\end{align*}
This allows us to decompose these mixed terms in the right-hand side of \cref{eq: ineq-1} as
\begin{align*}
    \eta \langle \nabla_x f(w_t,z_t), \Log_{w_t}(x) \rangle & = \langle \eta \nabla_x f(w_{t},z_{t}) - \Log_{w_{t}}(x_t) , \Log_{w_{t}}(x) \rangle \\
    & + \langle \Log_{w_{t}}(x_t), \Log_{w_{t}}(x) \rangle \\
    & = - \langle \Log_{w_{t}}(x_{t+1}), \Log_{w_{t}}(x) \rangle + \langle \Log_{w_{t}}(x_t), \Log_{w_{t}}(x) \rangle, \\
    \eta \langle \nabla_y f(w_t,z_t), \Log_{z_t}(y) \rangle & = \langle \eta \nabla_y f(w_{t},z_{t}) + \Log_{z_{t}}(y_{t}), \Log_{z_{t}}(y) \rangle \\
    & -  \langle \Log_{z_{t}}(y_t), \Log_{z_{t}}(y) \rangle \\
    & = + \langle \Log_{z_{t}}(y_{t+1}), \Log_{z_{t}}(y) \rangle - \langle \Log_{z_{t}}(y_t), \Log_{z_{t}}(y) \rangle.
\end{align*}
Plugging this decomposition back into \cref{eq: ineq-1} results in following inequality: 
\begin{align} \label{eq: ineq-2}
\begin{split}
     f(w_{t}, y) - f(x, z_{t}) & \leq  \tfrac{1}{\eta} \langle \Log_{w_{t}}(x_{t+1}), \Log_{w_{t}}(x) \rangle - \tfrac{1}{\eta} \langle \Log_{w_{t}}(x_t), \Log_{w_{t}}(x) \rangle \\
    & + \tfrac{1}{\eta} \langle \Log_{z_{t}}(y_{t+1}), \Log_{z_{t}}(y) \rangle - \tfrac{1}{\eta} \langle \Log_{z_{t}}(y_t), \Log_{z_{t}}y \rangle.
\end{split}
\end{align}
Now, it suffices to bound the right-hand side of \cref{eq: ineq-2} by leveraging comparison inequalities on Riemannian manifolds with bounded sectional curvature. \revisenew{Combining the bounded domain condition and Lemma~\ref{lemma: tri-ineq-nonpos}, we then obtain}
\begin{align*}
    - 2\langle \Log_{w_{t}}(x_{t}), \Log_{w_{t}}(x) \rangle & \leq - \xi d_{\mfld}^2(w_{t}, x_t) - d_{\mfld}^2(w_{t}, x) + d_{\mfld}^2(x_{t}, x) \\
    -2\langle \Log_{z_{t}}(y_{t}), \Log_{z_{t}}(y) \rangle & \leq - \xi d_{\mfldn}^2(z_{t}, y_t) - d_{\mfldn}^2(z_{t}, y) + d_{\mfldn}^2(y_{t}, y).
\end{align*}
\revisenew{Similarly, we use Lemma~\ref{lemma: tri-ineq-lower} and bounded domain to obtain}
\begin{align*}
    2\langle \Log_{w_{t}}(x_{t+1}), \Log_{w_{t}}(x) \rangle 
    \leq & \ \zeta d_{\mfld}^2(w_t, x_{t+1}) + d_{\mfld}^2(w_{t}, x) - d_{\mfld}^2(x_{t+1}, x), \\
    2\langle \Log_{z_{t}}(y_{t+1}), \Log_{z_{t}}(y) \rangle 
    \leq &  \ \zeta d_{\mfldn}^2(z_t, y_{t+1}) + d^2_{\mfldn}(z_{t}, y) - d^2_{\mfldn}(y_{t+1}, y).
\end{align*}
Inserting the above inequalities to \cref{eq: ineq-2} yields the desired inequality. 
\end{proof}
Before proceeding, we need the following lemma.
\revisenew{\begin{lemma} \label{lemma: geodesic-transport}
    For any two points $x,y\in\mfld$, it holds that $\Log_yx = -\Gamma_x^y\Log_xy$.
\end{lemma}
\begin{proof}
    Suppose $\gamma_x$ is the geodesic between $x$ and $y$, i.e. $\gamma_x(0)= x$ and $\gamma_x(1) = y$. Hence, $\gamma_y(t) = \gamma_x(1-t)$ is also a geodesic with $\gamma_y(0)=y$ and $\gamma_y(1)=x$. By the chain rule and the definition of exponential map the following holds:
    \begin{align*}
        \Log_yx = \gamma'_y(0) = - \gamma'_x(1).
    \end{align*}
    We consider the parallel transport of $\Log_x^y$ along geodesic $\gamma_x$. Then by \cite{lee2006riemannian}, there exists a unique vector field $v(t)$ along $\gamma_x$ such that 
    \begin{align}  \label{eq: pt-geodesic}                       
        \nabla_{\gamma'_x(t)}v(t) =  0 \qquad \text{and} \qquad v(0) = \Log_x^y.
    \end{align}
    We notice that $v(t) = \gamma'_x(t)$, the following condition holds due to the geodesic equation
    \begin{align*}             
        \nabla_{\gamma'_x(t)}\gamma'_x(t) =  0.
    \end{align*}
    Also, we have $\gamma'_x(0) = \Log_xy$. Then $\gamma_x'(t)$ is the unique vector field satisfying \eqref{eq: pt-geodesic} and hence we conclude
    \begin{align*}
        \Gamma_x^y\Log_xy = v(1) = \gamma'_x(1) = -\Log_yx.
    \end{align*}
\end{proof}
}
The next lemma states that the error terms scale quadratically in the step-size $\eta$.
\begin{lemma} \label{lemma: rceg-2}
Suppose the same condition in \cref{thm: convex-concave}. Then for the iteration $(x_t,y_t,w_t,z_t)$ produced by \cref{algo: extragrad}, it holds that
\begin{align*}
    d^2_{\mfld}(w_t,x_{t+1}) & \leq \eta^2L^2 \cdot (d^2_{\mfld}(w_t,x_{t}) + d^2_{\mfldn}(y_t,z_{t})), \\
    d^2_{\mfldn}(z_t,y_{t+1}) & \leq \eta^2L^2 \cdot (d^2_{\mfld}(w_t,x_{t}) + d^2_{\mfldn}(y_t,z_{t})).
\end{align*}
\end{lemma}
\begin{proof}
We first recall from the iteration of \cref{algo: extragrad} and definition of inverse exponential map that
\begin{align*}
    \Log_{w_t}(x_{t+1}) & = \Log_{w_t}(x_t) - \eta\nabla_x f(w_t,z_t), \\
    \Log_{z_t}(y_{t+1}) & = \Log_{z_t}(y_t) + \eta \nabla_y f(w_t,z_t).
\end{align*}
Using the definition of Riemannian distance, we have
\begin{align} \label{eq: ineq-3}
\begin{split}
    d_{\mfld}^2(w_t, x_{t+1}) & = \| \Log_{w_t}(x_{t+1})\|^2 = \| \eta \nabla_x f(w_{t}, z_{t}) - \Log_{w_{t}}(x_{t}) \|^2, \\
    d_{\mfldn}^2(z_t, y_{t+1}) & = \| \Log_{z_t}(y_{t+1})\|^2 = \| \eta \nabla_y f(w_{t},z_{t}) + \Log_{z_t}(y_{t}) \|^2.
\end{split}
\end{align}
Next, we utilize Lemma~\cref{lemma: geodesic-transport} and obtain
\begin{align*}
    \Log_{w_t}(x_t) = - \Gamma_{x_t}^{w_t}\Log_{x_t}(w_t) = \eta\Gamma_{x_t}^{w_t}\nabla_x f(x_t,y_t), \\
    \Log_{z_t}(y_t) = - \Gamma_{y_t}^{z_t}\Log_{y_t}(z_t) = -\eta\Gamma_{y_t}^{z_t}\nabla_y f(x_t,y_t).
\end{align*}
Plugging the above equalities into \cref{eq: ineq-3} yields our result
\begin{align*}
    &d_{\mfld}^2(w_t, x_{t+1})  = \eta^2 \| \nabla_x f(w_{t}, z_{t}) - \Gamma_{x_t}^{w_{t}}\nabla_x f(x_t, y_t) \|^2 \leq \eta^2 L^2 \left( d^2_{\mfld}(x_t, w_{t}) + d^2_{\mfldn}(y_t, z_{t})\right) 
    \end{align*}
and
\begin{align*}
    &d_{\mfldn}^2(z_t, y_{t+1})  = \eta^2 \| \nabla_y f(w_{t},z_{t}) - \Gamma_{y_t}^{z_{t}}\nabla_y f(x_t, y_t) \|^2 \leq \eta^2 L^2 \left( d^2_{\mfld}(x_t, w_{t}) + d^2_{\mfldn}(y_t, z_{t})\right)
\end{align*}
where the inequalities are due to $L$-smoothness.
\end{proof}

\subsection{Proof of Theorem~\ref{thm: convex-concave}} \label{sec: proof-algo}
Finally, with these building blocks, we can present the formal proof of \cref{thm: convex-concave}.
\begin{proof}[Proof of \cref{thm: convex-concave}]
Starting with~\cref{lemma: rceg-1}, we immediately have for any $x\in X$, $y\in Y$,
   \begin{align*} 
    f(w_t, y) - f(x, z_t) 
    \leq & \tfrac{1}{\eta} \left( d^2_{\mfld}(x,x_{t})  + d^2_{\mfldn}(y,y_{t}) - d^2_{\mfldn}(y,y_{t+1}) - d^2_{\mfld}(x,x_{t+1})\right)\\
    - \tfrac{\xi}{\eta}  d^2_{\mfld}(w_t,x_t) & - \tfrac{\xi}{\eta} d^2_{\mfldn}(z_t,y_t) +  \tfrac{\zeta}{\eta} d^2_{\mfld}(w_t,x_{t+1}) + \tfrac{\zeta}{\eta} d^2_{\mfldn}(z_t,y_{t+1}).
    \end{align*}
\revisenew{We can bound the last two terms, i.e. $ d^2_{\mfld}(w_t,x_{t+1})$, $ d^2_{\mfldn}(z_t,y_{t+1})$, with \cref{lemma: rceg-2}:
\begin{align*}
    d^2_{\mfld}(w_t,x_{t+1}) + d^2_{\mfldn}(z_t,y_{t+1}) \leq 2\eta^2L^2 \cdot (d^2_{\mfld}(w_t,x_{t}) + d^2_{\mfldn}(y_t,z_{t})).
\end{align*}
Based on our parameter choice $\eta = \frac{1}{2L\sqrt{\tau}}$ where $\tau = \zeta / \xi$ (\emph{c.f.}, \Cref{eq: tau-ratio}), it holds that
\begin{align*}
    - \xi  d^2_{\mfld}(w_t,x_t) & - \xi d^2_{\mfldn}(z_t,y_t)  +  \zeta d^2_{\mfld}(w_t,x_{t+1}) + \zeta d^2_{\mfldn}(z_t,y_{t+1}) \\
    & \leq -(\xi - 2\eta^2L^2 \zeta) \cdot d^2_{\mfld}(w_t,x_t) -  (\xi - 2\eta^2L^2 \zeta) \cdot d^2_{\mfldn}(z_t,y_t) \leq 0.
\end{align*}
We plug this into the above inequality and obtain
}
  \begin{align*} 
    f(w_t, y) - f(x, z_t) 
    \leq  \tfrac{1}{\eta} \left( d^2_{\mfld}(x,x_{t})  + d^2_{\mfldn}(y,y_{t}) - d^2_{\mfldn}(y,y_{t+1}) - d^2_{\mfld}(x,x_{t+1})\right).
    \end{align*}
Summing from $1$ to $T$, we further obtain
\begin{align*}
    \sum_{t=1}^T f(w_t, y) - f(x, z_t)  \leq \frac{1}{\eta} \cdot \left( d_{\mfld}^2(x_1, x) + d_{\mfldn}^2(y_1, y) \right).
\end{align*}
Lastly, by Lemma~\ref{lemma: averaging}, the averaging scheme satisfies
\begin{align*}
    f(\overline{w}_T, y^*) -  f(x^*, \overline{z}_T)  \leq \frac{1}{T} \sum_{t=1}^T \left(f(w_t, y) - f(x, z_t)\right) \leq \frac{d_{\mfld}^2(x_1, x^*) + d_{\mfldn}^2(y_1, y^*)}{\eta T},
\end{align*}
where $(x^*,y^*)$ is the global saddle point pair. Hence the result follows.
\end{proof}
}


\section{Applications and experiments} \label{sec: empirical}
In this section, we confirm the theoretical and algorithmic results of our work through two experiments.
\subsection{Strong duality for constrained Hadamard optimization}
From the theoretical aspect, we show the prowess of \cref{thm: geodesic-sion} and \cref{cor: riemannian-sion} by establishing a strong-duality result for the constrained Hadamard optimization problem. The setting is previously entailed in \Cref{sec: examples}. 
\begin{corollary}\label{cor: lagrangian}
Consider the constrained optimization problem in \cref{eq: constrained-hadamard} on a Hadamard manifold $\mfld$. \revise{If $X \subseteq \mfld$ is compact and geodesically convex,}  both $g$ and each $h_i$, $i = 1, \dots n$, are lower semi-continuous and geodesically convex, then the Lagrangian $f(x, \lambda) = g(x) + \langle h(x), \lambda \rangle$ satisfies the following identity:
\begin{align*}
    \revise{\min_{x \in X} \sup_{\lambda \in \bbR^n} f(x, \lambda) = \sup_{\lambda \in \bbR^n} \min_{x\in X} f(x, \lambda).}
\end{align*}
\end{corollary}
\begin{proof}
This claim is immediate from \cref{thm: geodesic-sion}.
\end{proof}
Similar to the Euclidean case, \cref{cor: lagrangian} guarantees that the minimizer for \cref{eq: constrained-hadamard} can be efficiently found by maximizing the dual problem $g_{\alpha}^*(\lambda) = \min_x f_\alpha(x,\lambda)$. We point out a similar result can be found in \cite{yang2014riem}, which establishes a KKT theorem for constrained Hadamard optimization problem.

\subsection{Robust SPD-PCA}
We demonstrate the tractability of our RCEG by conducting numerical experiments on solving robust SPD-PCA. While the problem in \cref{eq: robust-pca} is difficult in the Euclidean space, we show that it can be efficiently solved under our geodesic convex-concave setting. 

Before we present the experiments, we comment on how well this problem could align with our assumptions. More precisely, the SPD manifold is Hadamard with its curvature in range $[-\tfrac{1}{2}, 0]$ \revise{\cite{criscitiello2021negative,dolcetti2018differential}}. The sphere manifold is of positive curvature $+1$ and is a complete manifold. 
\revise{From the definition of Riemannian distance, $f_{\alpha}(x,M)$ in~\eqref{eq: robust-pca} is geodesic strongly-concave and smooth in $M$ (\cite{alimisis2020continuous}). \revisenew{It is also trivial to verify that $f_{\alpha}$ is geodesic smooth and locally convex} in $x$ around the top eigenvector $u_1$ of $M$, which is also the minimizer given $M$. By considering a small level set around $u_1$, we can apply our \cref{thm: geodesic-sion} to guarantee the existence of saddle point.}

\revisenew{We compare our RCEG with the Riemannian gradient descent-ascent (RGDA) method.  The RDGA method performs the following iteration:
\begin{align*}
    (x_{t+1}, y_{t+1}) = \Exp_{(x_t,y_t)}(-\eta \nabla_x f(x_t,y_t), \eta \nabla_y f(x_t,y_t)).
\end{align*}
We also consider the effect of averaging scheme in \cref{eq: geodesic-average}. We will use RCEG-last and RCEG-ave to denote, respectively, the last-iterate version and the average version of RCEG. Respectively, RGDA-last and RGDA-ave refer to the cases when the algorithm outputs the last-iterate and the average average iterate.
}

\paragraph{Data generation} We run our test on a synthetic dataset $\{M_i \in \spd(n) \}_{i=1}^k$. For each $M_i$, we first produce $B \in \bbR^{n \times n}$ with i.i.d. random entries from standard Gaussian distribution and compute its QR decomposition as $B = QR$, where $Q\in \bbR^{n \times n}$ is orthonormal matrix and $R \in \bbR^{n \times n}$ is upper-triangular matrix. We then generate random eigenvalue $\sigma = (\sigma_1, \dots, \sigma_n)$ in range $[\mu, L]$. Finally we obtain $M_i = Q\diag(\sigma)Q^\top$.

\paragraph{Results} The empirical performance of Riemannian minimax algorithms is illustrated in \cref{fig: robust-pca}. \revisenew{The RCEG-last is able to converge in an almost linear rate in {\it later stages}, whereas the RCEG-ave converges in a slow sublinear rate. The difference is due to the gradient dominance and local geodesic strong-convexity and strong-concavity of our objective \cite{zhang2016first}: while the sublinear rate of average regime is predicted by \cref{thm: convex-concave}, the fast rate of last-iterate regime can be explained by a recent follow-up paper \cite{jordan2022first} which shows that RCEG-last achieves linear rate for geodesic strongly-convex-strongly-concave objectives. 

}

\begin{figure}[ht]
    \centering
    \includegraphics[width=\textwidth]{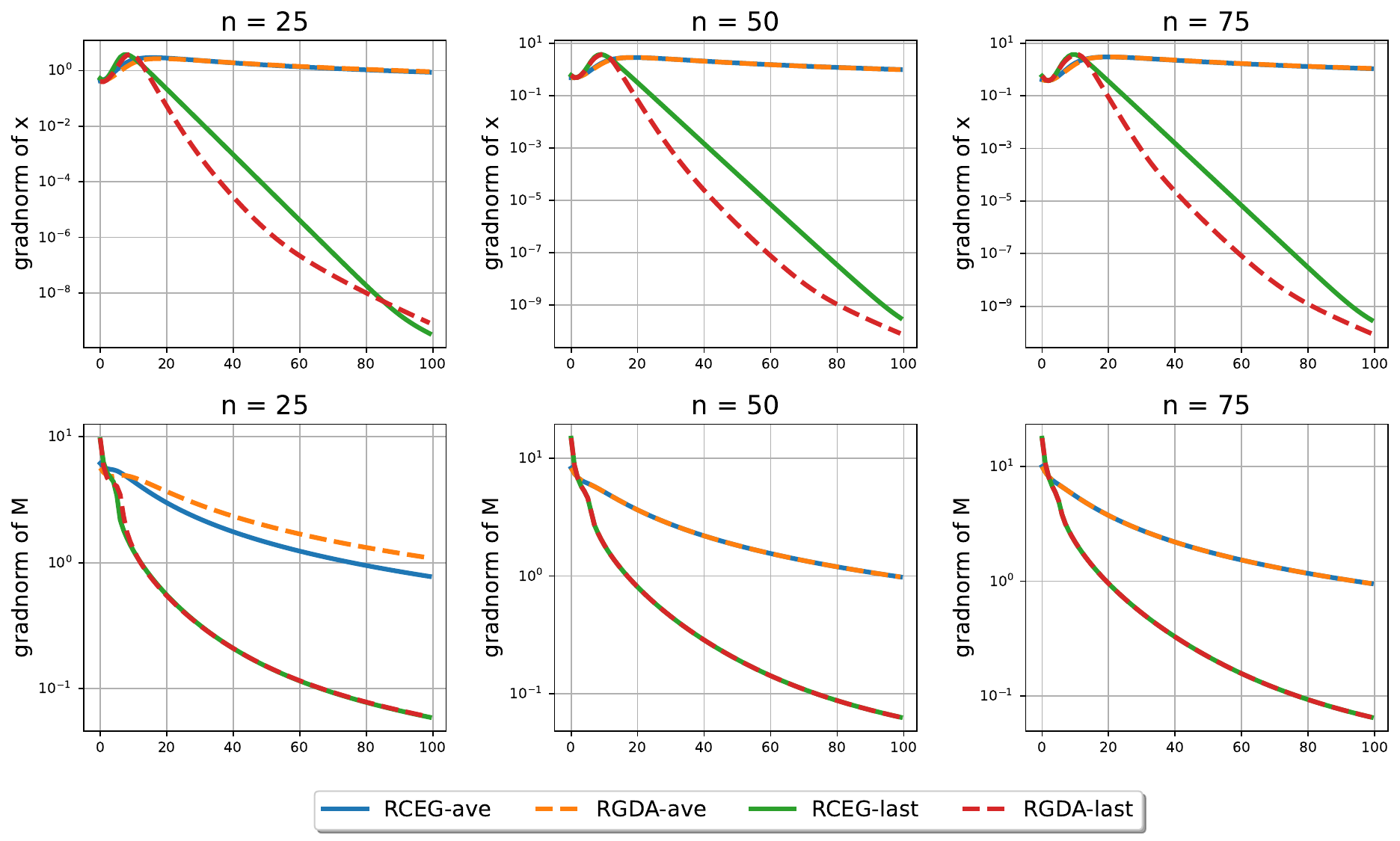}
    \caption{Convergence of RCEG for robust SPD-PCA.  We use step-size $\eta=0.1$, $k=40$, penalty term $\alpha=3$, $\mu = 0.2$ and $L=4.5$ for different trajectories.}
    \label{fig: robust-pca}
\end{figure}

\revise{
\subsection{Robust Karcher mean}
We illustrate the power of our RCEG by a second experiment on the robust Karcher mean problem. Defined over the SPD manifold, \eqref{eq: karcher} is a globally strongly-convex-strongly-concave function for a properly chosen $\alpha$. Therefore we can apply our \cref{thm: geodesic-sion} to guarantee the existence of saddle point.

\paragraph{Setting and results} We run the test on a synthetic dataset $\{M_i\in\spd(n)\}_{i=1}^k$, which is generated in the same way as the experiment for RPCA. The empirical performance of RCEG and RGDA can be found in \cref{fig: karcher}.}

\begin{figure}[ht]
    \centering
    \includegraphics[width=\textwidth]{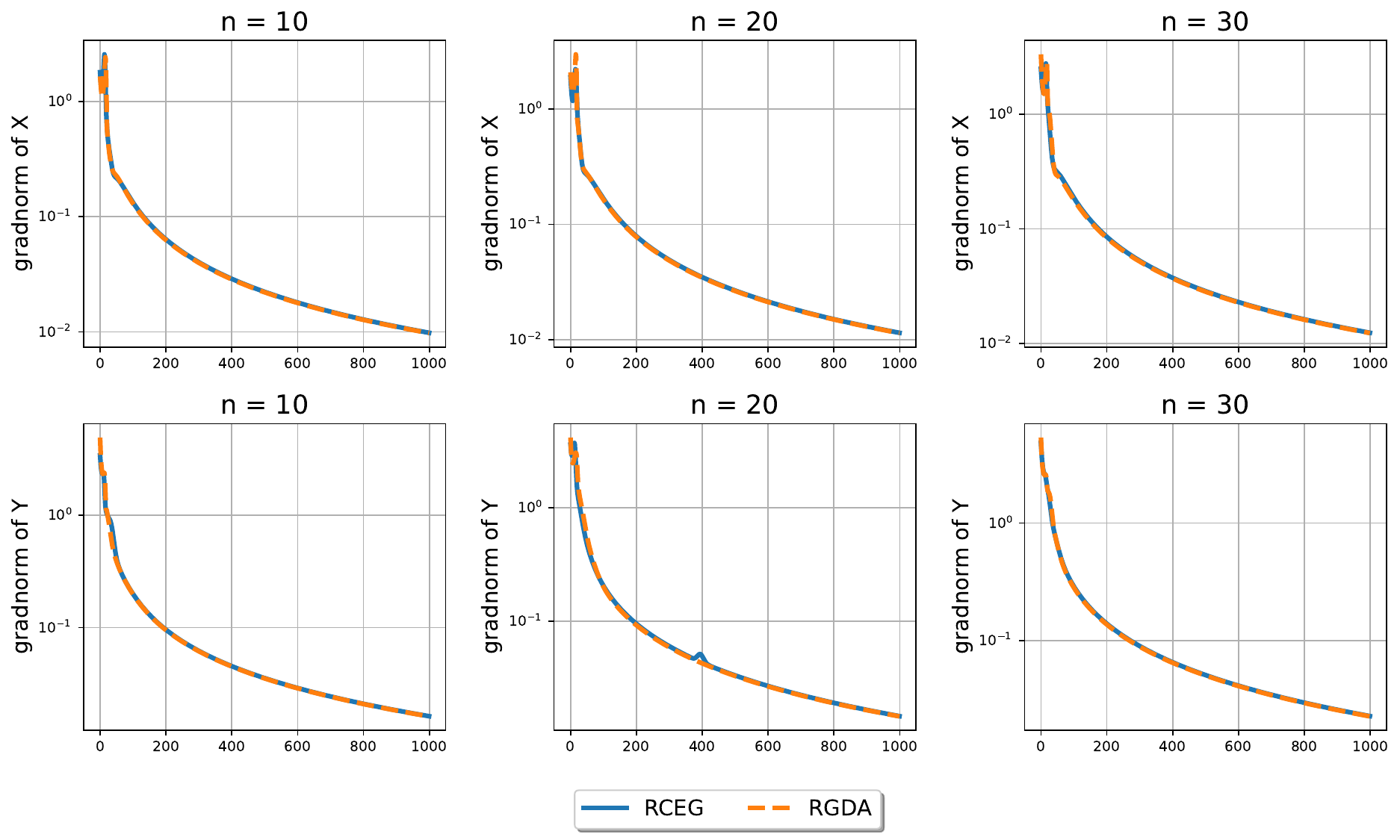}
    \caption{Convergence of RCEG for robust Karcher mean problem. We use step-size $\eta=0.2$, $k=10$, penalty term $\alpha=3$, $\mu = 10^{-3}$ and $L=1$ for different trajectories.}
    \label{fig: karcher}
\end{figure}

\subsection{SPD Bilinear function}
\revise{In this subsection, we provide a synthetic test problem to illustrate the better convergence property of RCEG over RGDA. As the direct generalization of Euclidean gradient-descent method, RGDA is not guaranteed to convergence for convex-concave objectives. We empirically verify this by utilizing $f(x, y) = \langle\Log_{x}(x_0),\Log_y(y_0)\rangle_F$, where $x,y$ belong to the same SPD manifold $\spd(n)$ and $\langle \cdot, \cdot\rangle_F$ is the Frobenius inner product.} Then $f$ formalizes an analogy of Euclidean bilinear function. The result in \cref{fig: bilinear} illustrates that, while our RCEG is convergent, similar to its Euclidean counterpart, the naive Riemannian gradient descent-ascent method can diverge for certain geodesically convex-concave objectives.

\begin{figure}[ht]
    \centering
    \includegraphics[width=1\textwidth]{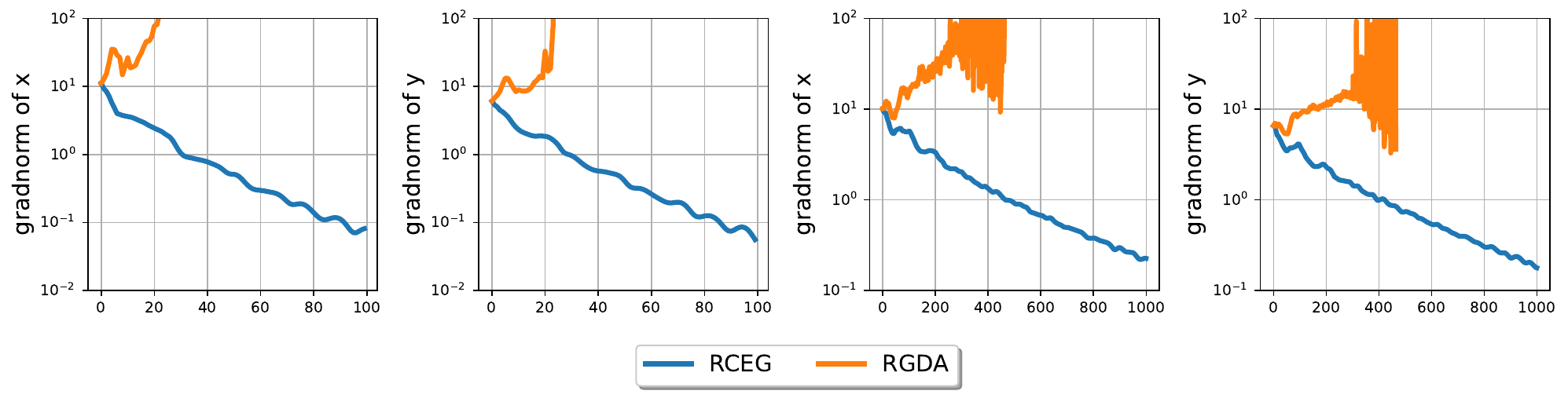}
    \vspace{-0.6em}
    \caption{Comparison between RGDA and RCEG for bilinear objective. While RCEG is convergent, the RGDA method is divergent for minimax problem $f(x,y) = \langle\Log_{x}(x_0),\Log_y(y_0)\rangle_F$, where $x,y$ are defined on $\spd(100)$. We utilize a step-size $\eta = 0.2$.}
    \label{fig: bilinear}
\end{figure}
\section{Additional related work}
In this section, we will cover some other topics and works relevant to our theme.

\paragraph{Convex-concave minimax algorithms}
The majority of results on minimax optimization leverages the convex-concave setting. The optimal convergence rate for smooth convex-concave problems is $\mathcal{O}(1/\epsilon)$ in terms of duality gap, achieved by mirror-prox method \cite{nemirovski2004prox}, extragradient \cite{mokhtari2020unified} or proximal gradient descent \cite{TSENG1995237}. The rate is matched by the lower-bound analysis in \cite{ouyang2021lower}. Another line \cite{TSENG1995237,mokhtari2020unified,gidel2018variational} studied the strongly-convex-strongly-concave setting, establishing a linear convergence to saddle points. 
Moreover, several works \cite{thekumparampil2019,lin2020near,alkousa2019accelerated} focused on the accelerated algorithms to improve the reliance on conditional number. Specifically, a recent work of \cite{lin2020near} established a near-optimal rate, matching the lower-bound \cite{ouyang2021lower}. 

\paragraph{Nonconvex-nonconcave minimax}
In the general nonconvex-nonconcave minimax problems, determining the existence of global saddle point is NP-hard. Hence a prominent task is to find a well-defined and tractable notion of stationarity. Along this line, works like \cite{jin2020local,mangoubi2021greedy,fiez2021global} investigated different notions of local optimality and their properties. Concurrently, several results \cite{daskalakis2018limit,adolphs2019local,mazumdar2019finding} focused on the relations
between the stable fixed points of algorithm and local stationarity. Another line of research also considers problems with additional structure. For instance, \cite{yang2020global} tackled problem with Polyak-Lojasiewicz (PL) inequality; \cite{diakonikolas2021efficient,malitsky2020golden,liu2019towards} explored Minty variational inequality condition.

\paragraph{Geodesic convex optimization} 
Geodesic convex optimization is a natural extension of convex optimization in Euclidean space onto Riemmanian manifolds. The pioneer work of this field includes \cite{udriste1994convex,absil2009optimization}. More recently, \cite{zhang2016first} provided a first non-asymptotic analysis for Riemannian gradient methods. Subsequent works of the flourishing line explored topics such as acceleration  \cite{zhang2018estimate,pmlr-v125-ahn20a,hamilton2021no,criscitiello2021negative}, variance reduction method \cite{zhang2016riemannian,sato2019riemannian}, and adaptive methods \cite{pmlr-v97-kasai19a}. A parallel line of research tackled constrained Riemannian optimization by studying a hybrid minimax setting, in which $\mfld$ is the Riemannian manifold and $\mfldn$ is Euclidean space. In particular, \cite{khuzani2017stochastic,liu2020simple} formalized the task of constrained geodesic-convex optimization on Riemannian manifold as a minimax problem by augmented Lagrangian method. \cite{huang2020gradient} considered a geodesic-convex-Euclidean-concave minimax problem and analyzed the convergence complexity of a novel Riemannian descent-ascent method. 

\section{Conclusions and Perspectives}
In this work, we provide a new perspective into nonconvex-nonconcave minimax optimization and game theory by considering geodesic convex-concave problems in non-linear geometries. First, we provide an analog of Sion's theorem on geodesic metric spaces. 
Second, we provide novel and efficient minimax algorithm for a different class of geodesic convex-concave games on geodesically complete Riemannian manifolds. We believe our work takes a significant step towards understanding the properties of minimax problems in non-linear geometry, and should help inform the study of many structured learning problems on manifolds. We would like to promote the future investigations and applications by raising several open questions.


\paragraph{Minimax algorithm in metric space}
While most existing literature focuses on Riemannian manifold, few works attempt to tackle the optimization problem in other instances of nonlinear geometry. For example, in~\cite{Bacak+2014,bavcak2013proximal} minimization of convex function in CAT(0) space is considered (geodesic metric spaces of nonpositive curvature), where the notion of a subgradient is absent. In~\cite{bavcak2013proximal} a proximal point algorithm is employed for such settings and shown to admit weak convergence to a minimizer. Since \cref{thm: geodesic-sion} is valid even without the Riemannian metric structure, it lays down a foundation for the study of minimax problems in general metric spaces using proximal operators. 

\paragraph{Acceleration in Riemannian minimax}
Another promising direction is to establish faster rates for Riemannian minimax. Nevertheless, all existing Euclidean accelerated minimax algorithms require accelerated gradient methods as subroutines. Yet, full acceleration without stronger assumptions on curvature and diameter is not possible even for minimization problems, due to \cite{hamilton2021no, criscitiello2021negative}. Nevertheless, partial acceleration is still possible \cite{pmlr-v125-ahn20a}. A potential route for accelerating minimax problems is to consider manifolds with constant curvature \cite{martinez2022global}. However, the result in \cite{martinez2022global} still suffers from an exponential dependence over diameter. We hope these issues can be solved by future works.
 
\paragraph{Matching lower bound} 
Our work opens a pathway to establish upper bound for Riemannian minimax problem. However, a matching lower bound analysis like \cite{ouyang2021lower} is still lacking for minimax problems in Riemannian geometry. 

\bibliographystyle{siamplain}
\bibliography{references}

\end{document}